\numberwithin{equation}{section}
\newcommand{\beg}{\begin{equation}}
\newcommand{\eeg}{\end{equation}}
\newcommand{\ben}{\begin{eqnarray*}}
	\newcommand{\een}{\end{eqnarray*}}
\newtheorem{thm}{Theorem}[section]
\newtheorem{lem}[thm]{Lemma}
\newtheorem{prop}[thm]{Proposition}
\numberwithin{equation}{section} 
\theoremstyle{definition}
\newtheorem{defn}[thm]{Definition}
\newtheorem{rem}[thm]{Remark}
\newtheorem{eg}[thm]{Example}
\newcommand{\C}{\mathbb{C}}
\newcommand{\ov}{\overline}
\begin{document}
\title[The symmetrization map and $\Gamma$-contractions]
{The symmetrization map and $\Gamma$-contractions}

\author[Sourav Pal]{Sourav Pal}
\address[Sourav Pal]{Mathematics Department,
Indian Institute of Technology Bombay, Powai, Mumbai - 400076.}
\email{sourav@math.iitb.ac.in}

\keywords{Symmetrization map, Symmetrized bidisc, Spectral set,
$\Gamma$-contraction}

\subjclass[2010]{47A13, 47A15, 47A20, 47A25}

\thanks{The author is supported by the Seed Grant of IIT Bombay, the CPDA of the Govt. of India and the MATRICS Award of SERB, (Award No. MTR/2019/001010) of DST, India.}

\begin{abstract}

The symmetrization map $\pi:\mathbb C^2\rightarrow \mathbb C^2$ is defined by
$
\pi(z_1,z_2)=(z_1+z_2,z_1z_2).
$
The closed symmetrized bidisc $\Gamma$ is the symmetrization of
the closed unit bidisc $\overline{\mathbb D^2}$, that is,
\[
\Gamma = \pi(\overline{\mathbb D^2})=\{ (z_1+z_2,z_1z_2)\,:\,
|z_i|\leq 1, i=1,2 \}.
\]
A pair of commuting Hilbert space operators $(S,P)$ for which $\Gamma$ is a spectral set is called a
$\Gamma$-contraction. Unlike the scalars in $\Gamma$, a $\Gamma$-contraction may not arise as a symmetrization of a pair of commuting contractions, even not as a symmetrization of a pair of commuting bounded operators. We characterize all $\Gamma$-contractions which are symmetrization of pairs of commuting contractions. We show by constructing a family of examples that even if a $\Gamma$-contraction $(S,P)=(T_1+T_2,T_1T_2)$ for a pair of commuting bounded operators $T_1,T_2$, no real number less than $2$ can be a bound for the set $\{ \|T_1\|,\|T_2\| \}$ in general. Then we prove that every $\Gamma$-contraction $(S,P)$ is the restriction of a $\Gamma$-contraction $(\widetilde S, \widetilde P)$ to a common reducing subspace of $\widetilde S, \widetilde P$ and that $(\widetilde S, \widetilde P)=(A_1+A_2,A_1A_2)$ for a pair of commuting operators $A_1,A_2$ with $\max \{\|A_1\|, \|A_2\|\} \leq 2$. We find new characterizations for the $\Gamma$-unitaries and describe the distinguished boundary of $\Gamma$ in a different way. We also show some interplay between the fundamental operators of two $\Gamma$-contractions $(S,P)$ and $(S_1,P)$.

\end{abstract}

\maketitle

\section{Introduction}

\vspace{0.4cm}

\noindent Throughout the paper, all operators are bounded linear operators defined on complex Hilbert spaces. We denote by $\mathbb C$, the space of complex numbers and by $ \mathbb D , \mathbb T$, the unit disc and unit circle respectively in $\mathbb C$ with centre at the origin. By $\mathcal B(\mathcal H)$ we mean the algebra of bounded operators acting on a Hilbert space $\mathcal H$. For a positive operator $Q$, $\sqrt{Q}$ denotes the unique positive square root of $Q$. A contraction is an operator whose norm is not greater than $1$. If $P$ is a contraction then its defect operator $D_P$ is given by $D_P=\sqrt{I-P^*P}$. Also for $A,B \in \mathcal B(\mathcal H)$, $[A,B]=AB-BA$.

\subsection{Motivation} The symmetrization map $\pi:\mathbb C^2\rightarrow \mathbb C^2$ is defined by $\pi(z_1,z_2)=(z_1+z_2,z_1z_2)$. The \textit{symmetrized bidisc}, denoted by $\mathbb G$, is the image under $\pi$ of the bidisc $\mathbb D^2$, that is,
\[
\mathbb G =\{(z_1+z_2,z_1z_2)\,:\, |z_i|< 1, i=1,2   \}\subset \mathbb C^2.
\]
Also, the \textit{closed symmetrized bidisc}, denoted by $\Gamma$, is the following set
\[
\Gamma =\overline{\mathbb G}= \pi(\overline{\mathbb D^2})=\{(s,p)= (z_1+z_2,z_1z_2)\,:\,
|z_i|\leq 1, i=1,2 \}\;.
\]
The symmetrized bidisc has its origin in the $2\times 2$ spectral Nevanlinna-Pick interpolation. The general $n\times n$ spectral Nevanlinna-Pick interpolation problem states the following: given distinct points $w_1,\dots,w_n$ in $\mathbb D$ and $n\times n$ matrices $A_1,\dots,A_n$ in the spectral unit ball $\Omega_n$ of the space of $n\times n$ matrices $\mathcal M_n(\mathbb C)$, whether or under what conditions it is possible to find an analytic function $f:\mathbb D \rightarrow \Omega_n$ such that $f(w_i)=A_i$, $i=1,\dots,n$. It is obvious that a $2 \times 2$ matrix $A$ is in $\Omega_2$ if and only if its eigenvalues $\sigma_1,\sigma_2$ are in $\mathbb D$ and this happens if and only if $\left(\text{tr} (A),\det (A)\right)=(\sigma_1+\sigma_2,\sigma_1\sigma_2)$ belongs to $\mathbb G$. The main motivation behind studying the symmetrized bidisc is that the $2\times 2$ spectral Nevanlinna-Pick interpolation problem reduces to a similar interpolation problem of $\mathbb G$ in the following way.

\begin{prop}[\cite{N-P-T}, Proposition 1.1]\label{Nikolov}

Let $\alpha_1,\dots, \alpha_n\in\mathbb D$ be distinct points and let $A_1=\lambda_1 I,\dots, A_k=\lambda_kI\in \Omega_2$ be scalar matrices. Also let $A_{k+1},\dots, A_n \in \Omega_2$ be non-scalar matrices. Suppose $\phi=(\phi_1,\phi_2):\mathbb D \rightarrow \mathbb G$ is a holomorphic map such that $\phi(\alpha_j)=\sigma(A_j)=\left(\text{tr} (A_j),\det (A_j)\right)$ for $j=1,\dots ,n$. Then there exists a holomorphic map $\psi:\mathbb D \rightarrow \Omega_2$ satisfying $\phi=\sigma \circ \psi$ and $\psi (\alpha_j)=A_j$ for $j=1,\dots , n$ if and only if $\phi_2'(\alpha_j)=\lambda_j \phi_1'(\alpha_j)$ for $j=1,\dots , k$.
\end{prop}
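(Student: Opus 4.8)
The plan is to treat the two implications separately: the forward (necessity) direction is a one-line computation, while the converse requires building an explicit holomorphic lift and then correcting its values by a holomorphic conjugation.

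For necessity, suppose $\psi:\mathbb{D}\to\Omega_2$ is holomorphic with $\sigma\circ\psi=\phi$ and $\psi(\alpha_j)=A_j$. Then $\phi_1=\operatorname{tr}\psi$ and $\phi_2=\det\psi$, so $\phi_1'=\operatorname{tr}\psi'$ and, by the adjugate form of Jacobi's formula for a $2\times 2$ matrix, $\phi_2'=\operatorname{tr}\big((\operatorname{tr}\psi)\,\psi'-\psi\psi'\big)$. At a scalar node $\alpha_j$ with $j\le k$ we have $\psi(\alpha_j)=\lambda_j I$, so $(\operatorname{tr}\psi(\alpha_j))\,\psi'(\alpha_j)-\psi(\alpha_j)\psi'(\alpha_j)=2\lambda_j\psi'(\alpha_j)-\lambda_j\psi'(\alpha_j)=\lambda_j\psi'(\alpha_j)$, and taking the trace gives $\phi_2'(\alpha_j)=\lambda_j\operatorname{tr}\psi'(\alpha_j)=\lambda_j\phi_1'(\alpha_j)$, which is exactly the asserted condition.

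For sufficiency I would first record the reformulation that the derivative condition at the scalar nodes is equivalent to the discriminant-type function $g:=\tfrac14\phi_1^2-\phi_2$ vanishing to order at least two at each $\alpha_j$ with $j\le k$: indeed $g(\alpha_j)=0$ automatically since $\phi(\alpha_j)=\sigma(\lambda_j I)=(2\lambda_j,\lambda_j^2)$, while $g'(\alpha_j)=\lambda_j\phi_1'(\alpha_j)-\phi_2'(\alpha_j)$, so $g'(\alpha_j)=0$ precisely when $\phi_2'(\alpha_j)=\lambda_j\phi_1'(\alpha_j)$. Writing $b(z):=\prod_{i\le k}(z-\alpha_i)$, the order-two vanishing makes $g/b$ holomorphic on $\mathbb{D}$ and still vanishing at every scalar node, so I can define the explicit lift $\psi_0(z)=\tfrac12\phi_1(z)I+\begin{pmatrix}0 & b(z)\\ g(z)/b(z)&0\end{pmatrix}$. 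By construction $\operatorname{tr}\psi_0=\phi_1$ and $\det\psi_0=\phi_2$, hence the eigenvalues of $\psi_0(z)$ are the two numbers whose sum and product are $\phi_1(z),\phi_2(z)$; as $\phi(z)\in\mathbb{G}$ these lie in $\mathbb{D}$, so $\psi_0(z)\in\Omega_2$. Moreover $\psi_0(\alpha_j)=\lambda_j I=A_j$ at each scalar node, while at a non-scalar node the $(1,2)$-entry $b(\alpha_j)\neq0$ forces $\psi_0(\alpha_j)$ to be non-scalar with the same characteristic polynomial as $A_j$.

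It then remains to correct the values at the non-scalar nodes without disturbing the scalar ones. Since $\psi_0(\alpha_j)$ and $A_j$ are non-scalar with equal trace and determinant, they are conjugate for each $j>k$; I pick $G_j\in GL_2(\mathbb{C})$ with $G_j^{-1}\psi_0(\alpha_j)G_j=A_j$, and set $G_j=I$ for $j\le k$. Given a holomorphic $F:\mathbb{D}\to GL_2(\mathbb{C})$ with $F(\alpha_j)=G_j$ for all $j$, the map $\psi:=F^{-1}\psi_0 F$ is holomorphic, shares the trace and determinant of $\psi_0$ (so $\sigma\circ\psi=\phi$ and $\psi(\mathbb{D})\subset\Omega_2$), and satisfies $\psi(\alpha_j)=A_j$ since $G_j=I$ at the scalar nodes and $G_j^{-1}\psi_0(\alpha_j)G_j=A_j$ at the non-scalar ones. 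The one genuine technical point is the existence of such an everywhere-invertible interpolant across the whole disc; I would obtain it by writing $G_j=\exp X_j$ (the matrix exponential $M_2(\mathbb{C})\to GL_2(\mathbb{C})$ is onto), interpolating the constants $X_j$ by an unconstrained matrix polynomial $X(z)$, and setting $F=\exp X$, which is automatically invertible for every $z$. The steps I expect to demand the most care are precisely this globalization of the conjugator together with verifying that the explicit $\psi_0$ simultaneously pins the scalar nodes to $\lambda_j I$ and keeps the spectrum inside $\mathbb{D}$.
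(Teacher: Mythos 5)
The paper does not prove this proposition at all: it is imported verbatim from \cite{N-P-T} (Proposition 1.1 there) and used only as motivation in the introduction, so there is no in-paper argument to compare yours against. Judged on its own, your proof is correct and complete. The necessity computation via $\operatorname{adj}(\psi)=(\operatorname{tr}\psi)I-\psi$ and Jacobi's formula is right, and the key observation for sufficiency --- that the derivative condition at the scalar nodes is exactly the statement that $g=\tfrac14\phi_1^2-\phi_2$ vanishes to order at least two there, so that $g/b$ is holomorphic and still vanishes at those nodes --- makes the companion-type lift $\psi_0=\tfrac12\phi_1 I+\left(\begin{smallmatrix}0&b\\ g/b&0\end{smallmatrix}\right)$ hit $\lambda_jI$ exactly at the scalar nodes while staying in $\Omega_2$ (its eigenvalues are the roots of $t^2-\phi_1t+\phi_2$, which lie in $\mathbb D$ because $\phi$ maps into $\mathbb G$). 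The remaining correction is also sound: for $j>k$ both $\psi_0(\alpha_j)$ and $A_j$ are non-scalar $2\times2$ matrices with the same characteristic polynomial, hence both are similar to the common companion matrix and so to each other, and your globalization of the conjugators via $F=\exp X$ with $X$ a Lagrange interpolant of logarithms $X_j$ of the $G_j$ produces a holomorphic, everywhere-invertible $F$ (surjectivity of $\exp:M_2(\mathbb C)\to GL_2(\mathbb C)$ and $\det\exp X=e^{\operatorname{tr}X}\neq0$), so $\psi=F^{-1}\psi_0F$ is holomorphic, spectrum-preserving, and interpolates all the data. This is essentially the standard lifting argument for the $2\times2$ spectral Nevanlinna--Pick reduction, in the spirit of the original sources, and I see no gap.
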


Obviously a bounded domain like $\mathbb G$, which has complex-dimension $2$, is much easier to deal with than a norm-unbounded object like $\Omega_2$ which has complex-dimension $4$. On the other hand, $\mathbb G$ was the first example of a non-convex domain in which the Caratheodory and Kobayashi distances coincide (see \cite{ay-jga}). Apart from such beautiful complex analytic aspects (see \cite{Pf-Zwo, B-S1, Ag-Ly-Yo1, Ag-Ly-Yo2} etc. for more complex analytic and geometric results), operator theory on the symmetrized bidisc was initiated in \cite{ay-jfa} to make a new approach to the $2 \times 2$ spectral Nevanlinna-Pick problem. Later, operator theory on $\Gamma$ turned out to be of independent interest and has been extensively studied in past two decades by several mathematicians, e.g. \cite{ay-pems, ay-jot, T-H, tirtha-sourav, tirtha-sourav1, sourav, pal-shalit, J-S, B-S2, B-D-S} and many more (see the references therein). In this article, we study is a commuting pair of operators for which $\Gamma$ is a spectral set.

\begin{defn}
A pair of commuting operators $(S,P)$ is called a $\Gamma$-\textit{contraction} if $\Gamma$
is a spectral set for $(S,P)$, that is, the Taylor joint spectrum $\sigma_T
(S,P)\subseteq \Gamma$ and the von-Neumann's inequality
\begin{equation}\label{eqn:I01}
\|f(S,P) \|\leq \sup_{(z_1,z_2)\in\Gamma}|f(z_1,z_2)| =\|f\|_{\infty,\, \Gamma}
\end{equation}
holds for all rational functions $f=p/q$ with $p, q \in \C[z_1,z_2]$ and $q$ does not have any zero inside $\Gamma$. 
\end{defn}
Note that, it follows from the definition that if $(S,P)$ is a $\Gamma$-contraction then $\|S\|\leq 2$ and $\|P\|\leq 1$. By virtue of polynomial convexity of $\Gamma$ (see Lemma 2.1 in \cite{ay-jfa}), it suffices if the von-Neumann's inequality (\ref{eqn:I01}) holds only for the polynomials in $\mathbb C[z_1,z_2]$  and an elementary proof is given below. Also, $(S,P)$ is called a \textit{complete} $\Gamma$-\textit{contraction} if $\Gamma$ is a complete spectral set for $(S,P)$, that is (\ref{eqn:I01}) holds when $f$ is any matricial polynomial.

\subsection{A brief outline of the results}

It follows from Ando's inequality that $(T_1+T_2, T_1T_2)$ is a $\Gamma$-contraction when $T_1,T_2$ are commuting contractions. Example 1.7 in \cite{ay-jot} shows that the converse is not true, i.e. not every $\Gamma$-contraction $(S,P)$ arrises as symmetrization of a pair of commuting contractions. Indeed, $(S,P)$ mat not be even symmetrization of a pair of commuting operators. Also, a remark was made in \cite{ay-jot} in this context without a proof about a characterization of the $\Gamma$-contractions which are symmerization of commuting contractions. In Section \ref{sec:02}, we show by an example that this remark is partially correct and in Theorem \ref{lem:char} we characterize all such $\Gamma$-contractions $(S,P)$ which are symmetrization of commuting contractions. We show by an explicit example that the condition $\|S\pm \Delta \| \leq 2$ cannot be ignored in this case, where $\Delta$ is a square root of $S^2-4P$ that commutes with $S,P$. We also show by examples that if $(S,P)=(T_1+T_2,T_1T_2)$, then $T_1,T_2$ may not be unique unlike the scalars in $\Gamma$. A natural question arises: if $(S,P)=(T_1+T_2,T_1T_2)$, then can we put a bound on the norm of $T_1, T_2$ ? We construct a family of $\Gamma$-contractions $\mathcal F=\{ (S_r,0):r\in \Lambda \}$ such that each member $(S_r, 0)$ is symmetrization of a unique pair, namely $\{S_r,0\}$ itself and that for every $\delta >0$, there is $(S_{\widetilde{r}},0)\in \mathcal F$ satisfying $2-\delta < \|S_{\widetilde{r}} \| \leq 2$. This shows that no real number less than $2$ can be a bound for $\{ \|T_1\|,\|T_2\| \}$ in general. Hence, we propose the following theorem which is the main result of this paper.

\begin{thm}\label{thm:N03}
For any $\Gamma$-contraction $(S,P)$, acting on a Hilbert space $\mathcal H$, there exists a Hilbert space $\mathcal K$ containing $\mathcal H$ as a closed linear subspace and a pair of commuting operators $T_1,T_2$ on $\mathcal K$ such that
\begin{itemize}
\item[(i)] $(T_1+T_2,T_1T_2)$ is a $\Gamma$-contraction ;
\item[(ii)] $\mathcal H$ is a joint reducing subspace for $T_1+T_2$ , $T_1T_2$ ;
\item[(iii)] $(T_1+T_2)|_{\mathcal H}=S$ , $(T_1T_2)|_{\mathcal H}=P$ ;
\item[(iv)] $\max \{ \|T_1\|,\,\|T_2\| \} \leq 2$.
\end{itemize}
\end{thm}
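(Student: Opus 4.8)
The plan is to realize $(S,P)$ as the restriction to the first coordinate of the \emph{doubled} pair $(S\oplus S,\,P\oplus P)$ on $\mathcal K=\mathcal H\oplus\mathcal H$, and then to symmetrize this doubled pair by producing a square root of its ``discriminant'' that commutes with it and has small enough norm. The guiding observation is the scalar identity $s^2-4p=(z_1-z_2)^2$ for $s=z_1+z_2$, $p=z_1z_2$: it suggests that if $(S,P)=(T_1+T_2,T_1T_2)$ then $T_1-T_2$ ought to be a square root of $S^2-4P$ commuting with $S$ and $P$, and conversely that any operator $\Delta$ with $\Delta^2=S^2-4P$ and $[\Delta,S]=[\Delta,P]=0$ furnishes a symmetrization via
\[
T_1=\tfrac12(S+\Delta),\qquad T_2=\tfrac12(S-\Delta),
\]
since then $T_1T_2=T_2T_1$, $T_1+T_2=S$ and $T_1T_2=\tfrac14(S^2-\Delta^2)=P$. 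The obstruction on $\mathcal H$ itself is that $S^2-4P$ need not admit any such commuting square root; enlarging $\mathcal H$ to $\mathcal K$ is exactly the device that removes this obstruction while keeping $(S,P)$ as a reducing restriction.

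First I would record the norm estimate that drives the whole argument. Applying the von Neumann inequality (\ref{eqn:I01}) to the polynomial $f(z_1,z_2)=z_1^2-4z_2$, and using $f(z_1+z_2,z_1z_2)=(z_1-z_2)^2$, gives
\[
\|S^2-4P\|\ \le\ \sup_{(z_1,z_2)\in\Gamma}|z_1^2-4z_2|\ =\ \sup_{|w_1|,|w_2|\le 1}|w_1-w_2|^2\ =\ 4 .
\]
Writing $R=S^2-4P$, we have $\|R\|\le 4$ and $R$ commutes with $S$ and $P$. On $\mathcal K=\mathcal H\oplus\mathcal H$ I set $\widetilde S=S\oplus S$ and $\widetilde P=P\oplus P$, and (assuming $R\neq 0$, the case $R=0$ being trivial with $T_1=T_2=S/2$ already on $\mathcal H$) define the \emph{balanced} square root
\[
\Delta=\begin{bmatrix} 0 & \lambda R \\ \lambda^{-1}I & 0 \end{bmatrix},\qquad \lambda=\|R\|^{-1/2}.
\]
A direct block computation gives $\Delta^2=R\oplus R=\widetilde S^{\,2}-4\widetilde P$, while $RS=SR$ and $RP=PR$ yield $[\Delta,\widetilde S]=[\Delta,\widetilde P]=0$. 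The point of the scaling is that it balances the two off-diagonal corners, so that $\|\Delta\|=\max\bigl(\lambda\|R\|,\ \lambda^{-1}\bigr)=\|R\|^{1/2}\le 2$.

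With $\Delta$ in hand I would put $T_1=\tfrac12(\widetilde S+\Delta)$ and $T_2=\tfrac12(\widetilde S-\Delta)$. Commutativity of $T_1,T_2$ is immediate from $[\Delta,\widetilde S]=0$, and the identities above give $T_1+T_2=\widetilde S=S\oplus S$ and $T_1T_2=\tfrac14(\widetilde S^{\,2}-\Delta^2)=P\oplus P=\widetilde P$. Since $(\widetilde S,\widetilde P)$ is the orthogonal direct sum of the $\Gamma$-contraction $(S,P)$ with itself, it is again a $\Gamma$-contraction, which is (i); because $\widetilde S$ and $\widetilde P$ are block diagonal, $\mathcal H\cong\mathcal H\oplus\{0\}$ is a joint reducing subspace with $\widetilde S|_{\mathcal H}=S$ and $\widetilde P|_{\mathcal H}=P$, which is (ii) and (iii). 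Finally $\|T_i\|\le\tfrac12\bigl(\|\widetilde S\|+\|\Delta\|\bigr)\le\tfrac12(2+2)=2$, which is (iv).

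The one genuinely delicate point, and the step I expect to be the crux, is the norm control in (iv). The naive square root $\left[\begin{smallmatrix} 0 & R \\ I & 0\end{smallmatrix}\right]$ of $R\oplus R$ already commutes with $\widetilde S$ and $\widetilde P$, but its norm is $\|R\|\le 4$, which only yields $\|T_i\|\le 3$; the improvement to the sharp bound $2$ rests on the two ingredients that $\|S^2-4P\|\le 4$ (so that $\|R\|^{1/2}\le 2$) and that rescaling the corners drops the norm of the square root from $\|R\|$ to $\|R\|^{1/2}$. Everything else is routine verification.
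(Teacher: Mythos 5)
Your proof is correct, and the core construction coincides with the paper's: the paper also doubles the space to $\mathcal H\oplus\mathcal H$ and takes $T_1,T_2$ of the form $\frac12(S\oplus S)\pm\frac12\Delta$ with $\Delta$ an off-diagonal $2\times 2$ block square root of $(S^2-4P)\oplus(S^2-4P)$ commuting with $S\oplus S$ and $P\oplus P$ (the paper's choice corresponds to your $\lambda=1/2$, i.e.\ corners $(S^2-4P)/2$ and $2I$, rather than your balanced $\lambda=\|S^2-4P\|^{-1/2}$). Where you genuinely diverge is in the verification of (iv) and in the surrounding scaffolding. The paper first passes through a ``symmetrized half-bidisc'' $\widehat\Gamma$, proves a rescaling lemma identifying $\widehat\Gamma$-contractions with $\Gamma$-contractions, and then bounds $\|T_i\|$ by invoking the fact that $\Gamma$ is a \emph{complete} spectral set for $(S,P)$ (Agler--Young), applying the matricial polynomial $F(s,p)=\left[\begin{smallmatrix} s/2 & (s^2-4p)/4\\ 1 & s/2\end{smallmatrix}\right]$ together with the crude bound $\|Q\|\le n\max_{i,j}|q_{ij}|$. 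You instead get $\|S^2-4P\|\le 4$ from the \emph{scalar} von Neumann inequality applied to $z_1^2-4z_2$ and then use $\|T_i\|\le\frac12(\|S\oplus S\|+\|\Delta\|)\le 2$; this is more elementary, since it uses only the defining property of a $\Gamma$-contraction and avoids the deep spectral-set-implies-complete-spectral-set theorem, and it dispenses with the half-bidisc detour entirely. Your separate treatment of the degenerate case $S^2-4P=0$ is needed only because of your adaptive choice of $\lambda$; the paper's fixed scaling avoids that case split. Both arguments are complete and yield the same bound $2$.
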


We turn our focus to some special classes of $\Gamma$-contractions, namely the $\Gamma$-{unitaries} and the $\Gamma$-{isometries}.
\begin{defn}
Let $S,P$ be commuting operators acting on a Hilbert space $\mathcal H$. Then
\begin{itemize}
\item[(i)] $(S,P)$ is a $\Gamma$-\textit{unitary} if $S,P$ are normal operators and the Taylor joint spectrum $\sigma_T(S,P)$ is a subset of the distinguished boundary $b\Gamma$ ;

\item[(ii)] $(S,P)$ is a $\Gamma$-\textit{isometry} if there is a Hilbert space $\mathcal K$ that contains $\mathcal H$ as a closed linear subspace and a $\Gamma$-unitary $(\widetilde S, \widetilde P)$ on $\mathcal K$ such that $\mathcal H$ is a joint invariant subspace of $\widetilde S, \widetilde P$ and that $\widetilde S|_{\mathcal H}=S$, $\widetilde P|_{\mathcal H}=P$.
\end{itemize}
\end{defn}
In Section \ref{sec:05}, we present a new characterization for the $\Gamma$-unitaries and also characterize the points in $\Gamma \setminus b\Gamma$.\\

In \cite{ay-jfa}, Agler and Young profoundly established the fact that $\Gamma$ is a spectral set for $(S,P)$ if and only if it is a complete spectral set for $(S,P)$ (see Theorem 1.2 in \cite{ay-jfa} ). The same was proved in \cite{tirtha-sourav} by constructing an explicit $\Gamma$-unitary dilation of a $\Gamma$-contraction (\cite{tirtha-sourav}, Theorem 4.3). A new and effective machinery namely \textit{fundamental operator}, was introduced in \cite{tirtha-sourav} to construct that explicit dilation. Indeed, in Theorem 4.2 of \cite{tirtha-sourav}, it was shown that for every $\Gamma$-contraction $(S,P)$, there exists a unique operator $F$ in $\mathcal B(\mathcal D_P)$ (where $\mathcal D_P=\overline{Ran}\,D_P$) such that the numerical radius of $F$ is not greater than $1$ and that 
\begin{equation}\label{eqn:I02}
S-S^*P=D_PFD_P.
\end{equation}
This unique operator $F$ is called the {fundamental operator} of the $\Gamma$-contraction $(S,P)$. In this context we ask the following question.\\

\noindent \textit{\textbf{Question 2.}} Let $(S,P)$ and $(S_1,P)$ be two $\Gamma$-contractions on a Hilbert space $\mathcal H$ such that $S,S_1$ commute. Suppose $(T,U)$ and $(T_1,U)$ on $\mathcal K\;(\supseteq \mathcal H)$ are $\Gamma$-unitary dilations of $(S,P)$ and $(S_1,P)$ respectively, where $\mathcal K$ is the minimal unitary dilation space for $P$. Then do $T,T_1$ necessarily commute ?\\

\noindent It is merely said that if $T,T_1$ are any lifts of $S,S_1$ respectively, then they do not necessarily commute. But, here in this context we have considered a special kind of commutant lifting and the lifts are defined on the minimal dilation space of $P$. Also, they are the first components of two $\Gamma$-unitaries having the same last component. We answer the above question in Section \ref{sec:06}. In Section 2, we accumulate a few results from the literature which will be used in sequel.

\vspace{0.5cm}

\section{Preliminaries}

\vspace{0.5cm}

\noindent In this Section, we recollect from literature a few results which we shall use in sequel. First, we state a theorem from \cite{ay-jot} that characterizes a point in $\Gamma$ in several ways.

\begin{thm}[\cite{ay-jot}, Theorem 1.1]\label{thm:char-gamma}
Let $(s,p)\in\mathbb C^2$. Then the following are equivalent:
\begin{itemize}
 \item[(i)] $(s,p)\in \Gamma$ ;
 
 \item[(ii)] $|s-\overline{s}p|+|p|^2 \leq 1$ and $|s|\leq 2$ ;
 
 \item[(iii)] $2|s-\overline{s}p|+|s^2-4p|+|s|^2 \leq 4$ ;
 
 \item[(iv)] $|p|\leq 1$ and there exists $\beta \in\mathbb C$ such that $|\beta|\leq 1$ and $s= \overline{\beta} + \beta p$.

\end{itemize}
\end{thm}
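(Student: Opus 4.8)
The plan is to read $(s,p)\in\Gamma$ as the assertion that the monic quadratic $z^2-sz+p$ has both roots $z_1,z_2$ in $\overline{\mathbb D}$, since by definition $s=z_1+z_2$ and $p=z_1z_2$ with $z_1,z_2$ its roots. With this dictionary I would prove the cycle $(i)\Rightarrow(ii)\Rightarrow(iv)\Rightarrow(i)$ to identify the first three conditions, and then fold in $(iii)$ by proving $(i)\Rightarrow(iii)$ and $(iii)\Rightarrow(ii)$.

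For $(i)\Rightarrow(ii)$, a direct expansion gives $s-\overline{s}p=z_1(1-|z_2|^2)+z_2(1-|z_1|^2)$, so writing $a=|z_1|,\,b=|z_2|\in[0,1]$ the triangle inequality yields $|s-\overline{s}p|+|p|^2\le a(1-b^2)+b(1-a^2)+a^2b^2$, and the bound $\le 1$ follows from the identity $1-\bigl[a(1-b^2)+b(1-a^2)+a^2b^2\bigr]=(1-a)(1-b)(1-ab)\ge 0$, while $|s|\le a+b\le 2$ is immediate. For $(ii)\Rightarrow(iv)$, when $|p|<1$ the candidate $\beta=(\overline{s}-s\overline{p})/(1-|p|^2)$ satisfies $\overline{\beta}+\beta p=s$ by a one-line computation and $|\beta|=|s-\overline{s}p|/(1-|p|^2)\le 1$ by $(ii)$; the boundary case $|p|=1$ forces $s=\overline{s}p$ together with $|s|\le 2$, from which a $\beta$ of modulus $|s|/2$ is readily produced. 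For $(iv)\Rightarrow(i)$, substituting $s=\overline{\beta}+\beta p$ into $z_1+z_2=\overline{\beta}+\beta z_1z_2$ and solving gives the Blaschke relation $z_1=\dfrac{\overline{\beta}-z_2}{1-\beta z_2}$; since $|z_1 z_2|=|p|\le 1$ at least one root lies in $\overline{\mathbb D}$, and the Schwarz–Pick inequality (the relation being symmetric in the two roots) then forces the partner into $\overline{\mathbb D}$ as well, the degenerate denominator occurring only when $|\beta|=1$, where the map degenerates to the constant $-\overline{\beta}\in\overline{\mathbb D}$.

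To incorporate $(iii)$ I would first prove $(i)\Rightarrow(iii)$ using $|s^2-4p|=|z_1-z_2|^2$ and $|s|^2=|z_1+z_2|^2$, so that $|z_1-z_2|^2+|z_1+z_2|^2=2(a^2+b^2)$ and the left side of $(iii)$ is bounded by $2\bigl[a(1-b^2)+b(1-a^2)+a^2+b^2\bigr]$, the needed estimate reducing to the companion identity $2-\bigl[a(1-b^2)+b(1-a^2)+a^2+b^2\bigr]=(1-a)(1-b)(a+b+2)\ge 0$. The remaining and most delicate step is $(iii)\Rightarrow(ii)$, where the unsquared modulus $|s^2-4p|$ obstructs any naive triangle-inequality manipulation. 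Here I would compute the exact value $|s^2-4p|^2=|s|^4-4|s|^2(1+|p|^2)+4|s-\overline{s}p|^2+16|p|^2$, rewrite $(iii)$ as $|s^2-4p|\le 4-|s|^2-2|s-\overline{s}p|$ (legitimate to square, both sides being nonnegative), and observe that after cancellation the squared inequality is exactly equivalent to $(4-|s|^2)\bigl(1-|p|^2-|s-\overline{s}p|\bigr)\ge 0$ together with $4-|s|^2-2|s-\overline{s}p|\ge 0$. The latter gives $|s|\le 2$; when $|s|<2$ the former yields $|s-\overline{s}p|+|p|^2\le 1$ directly, and when $|s|=2$ condition $(iii)$ forces $|s-\overline{s}p|=|s^2-4p|=0$, hence $p=s^2/4$, $|p|=1$, and the inequality holds with equality. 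This establishes $(ii)$ and closes the chain $(iii)\Rightarrow(ii)\Rightarrow(i)\Rightarrow(iii)$.

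I expect the bookkeeping in $(iii)\Rightarrow(ii)$ — in particular the clean factorization of the squared inequality into $(4-|s|^2)\bigl(1-|p|^2-|s-\overline{s}p|\bigr)$ and the separate treatment of the extremal case $|s|=2$ — to be the main obstacle, with the Blaschke identity in $(iv)\Rightarrow(i)$ the second point requiring care at the boundary $|\beta|=1$.
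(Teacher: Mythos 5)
The paper does not prove this statement: it is quoted verbatim in the preliminaries from Agler and Young (\cite{ay-jot}, Theorem 1.1), so there is no internal proof to compare against. Your argument is correct and self-contained. I checked the three algebraic identities on which it rests: $1-\bigl[a(1-b^2)+b(1-a^2)+a^2b^2\bigr]=(1-a)(1-b)(1-ab)$ for $(i)\Rightarrow(ii)$, $2-\bigl[a(1-b^2)+b(1-a^2)+a^2+b^2\bigr]=(1-a)(1-b)(a+b+2)$ for $(i)\Rightarrow(iii)$, and the expansion $|s^2-4p|^2=|s|^4-4|s|^2(1+|p|^2)+4|s-\overline{s}p|^2+16|p|^2$, which does reduce the squared form of $(iii)$ exactly to $(4-|s|^2)\bigl(1-|p|^2-|s-\overline{s}p|\bigr)\ge 0$; all three are exact, and the cycle $(i)\Rightarrow(ii)\Rightarrow(iv)\Rightarrow(i)$ together with $(i)\Rightarrow(iii)\Rightarrow(ii)$ closes the equivalence. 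Two small points in $(iv)\Rightarrow(i)$ need tightening. First, when $|\beta|=1$ the map $w\mapsto(\overline{\beta}-w)/(1-\beta w)$ degenerates to the constant $+\overline{\beta}$, not $-\overline{\beta}$ (harmless, since both lie on $\mathbb T$). Second, in the genuinely singular sub-case $1-\beta z_2=0$, which forces $|\beta|=|z_2|=1$ and $z_2=\overline{\beta}$, the relation $z_1+z_2=\overline{\beta}+\beta z_1z_2$ becomes an identity and gives no information about $z_1$; there you must fall back on $|z_1|=|z_1z_2|=|p|\le 1$ directly, which is the same observation you already used to place one root in $\overline{\mathbb D}$. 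With that one line added, the proof is complete.
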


The following result provides a characterization for the points in the distinguished boundary $b\Gamma$ of $\Gamma$.

\begin{thm}[\cite{tirtha-sourav}, Theorem 2.5]\label{thm:char-DB}
Let $(s,p)$ be a point in $\mathbb C^2$. Then $(s,p)\in b \Gamma$ if and only if $(s,p)\in \Gamma$ and $|p|=1$.
\end{thm}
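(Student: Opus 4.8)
The plan is to identify the distinguished boundary with the symmetrization of the torus, $\pi(\mathbb{T}^2)=\{(z_1+z_2,z_1z_2):|z_1|=|z_2|=1\}$, and then to recognize this set as $\{(s,p)\in\Gamma:|p|=1\}$. Throughout I regard $b\Gamma$ as the Shilov boundary of the uniform algebra $A(\Gamma)$ of functions continuous on $\Gamma$ and holomorphic on $\mathbb{G}$ (equivalently, by the polynomial convexity of $\Gamma$, of the rational functions with no poles on $\Gamma$). The first, purely set-theoretic, step is easy: if $(s,p)\in\Gamma$, write $(s,p)=(z_1+z_2,z_1z_2)$ with $|z_i|\le 1$; then $|p|=|z_1||z_2|\le 1$, and $|p|=1$ forces $|z_1|=|z_2|=1$, while conversely $|z_i|=1$ gives $|p|=1$. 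Hence $\pi(\mathbb{T}^2)=\{(s,p)\in\Gamma:|p|=1\}$, so the theorem is equivalent to the identity $b\Gamma=\pi(\mathbb{T}^2)$. (I note that Theorem \ref{thm:char-gamma}(ii) shows such points additionally satisfy $s=\overline{s}p$, a relation confirming the picture, though I will not need it.)

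For the inclusion $b\Gamma\subseteq\pi(\mathbb{T}^2)$ I would show that $\pi(\mathbb{T}^2)$ is a closed boundary for $A(\Gamma)$. Given $f\in A(\Gamma)$, the pullback $f\circ\pi$ lies in $A(\overline{\mathbb{D}^2})$, being the composition of the polynomial map $\pi$ with $f$; by the maximum modulus principle on the bidisc it attains its modulus maximum on the distinguished boundary $\mathbb{T}^2$. Since $\pi(\overline{\mathbb{D}^2})=\Gamma$, this yields $\max_{\Gamma}|f|=\max_{\pi(\mathbb{T}^2)}|f|$. As $b\Gamma$ is the smallest closed boundary, $b\Gamma\subseteq\pi(\mathbb{T}^2)$; in particular this already proves the forward implication $(s,p)\in b\Gamma\Rightarrow|p|=1$.

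The reverse inclusion $\pi(\mathbb{T}^2)\subseteq b\Gamma$ is the main obstacle. I would prove that every point $(s_0,p_0)=(z_1+z_2,z_1z_2)$ with $|z_i|=1$ is a peak point of $A(\Gamma)$, since peak points lie in every closed boundary, hence in $b\Gamma$. The difficulty is that $\pi$ is two-to-one, so peak functions must be symmetric, and the naive symmetrization of the polydisc peak function $\tfrac12(\overline{z}_1 w_1+\overline{z}_2 w_2)$ fails to peak. Instead I would use the Agler--Young functions $\Phi_\omega(s,p)=\dfrac{2\omega p-s}{2-\omega s}$ for $\omega\in\mathbb{T}$; these are rational with no poles on $\Gamma$ (the only zero of the denominator on $\Gamma$ is a removable singularity) and satisfy $|\Phi_\omega|\le 1$ on $\Gamma$. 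A direct computation on $\mathbb{T}^2$ gives $|\Phi_\omega|\equiv 1$ there, so, setting $\lambda_\omega=\Phi_\omega(s_0,p_0)\in\mathbb{T}$, the function $g_\omega=\tfrac12\bigl(1+\overline{\lambda_\omega}\,\Phi_\omega\bigr)$ lies in $A(\Gamma)$, equals $1$ at $(s_0,p_0)$, and has modulus strictly below $1$ off the set $E_\omega=\{(s,p)\in\Gamma:\Phi_\omega(s,p)=\lambda_\omega\}$; thus each $E_\omega$ is a peak set containing $(s_0,p_0)$.

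The proof then concludes by intersecting these peak sets. I would show $\bigcap_{\omega\in\mathbb{T}}E_\omega=\{(s_0,p_0)\}$: the assignment $\omega\mapsto\Phi_\omega(s,p)$ is a Möbius transformation in $\omega$ whose coefficients recover $(s,p)$ (evaluating the rational expression at $\omega=0$ reads off $s=-2\Phi_0(s,p)$, and any $\omega\neq 0$ then reads off $p$), so agreement on the infinite set $\mathbb{T}$ forces $(s,p)=(s_0,p_0)$. Passing to a countable dense subset $\{\omega_k\}\subseteq\mathbb{T}$ preserves this by continuity of $\omega\mapsto\Phi_\omega(s,p)$, and since a countable intersection of peak sets of a separable uniform algebra is again a peak set, the singleton $\{(s_0,p_0)\}$ is a peak set, i.e.\ $(s_0,p_0)$ is a peak point and lies in $b\Gamma$. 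Combining the two inclusions gives $b\Gamma=\pi(\mathbb{T}^2)=\{(s,p)\in\Gamma:|p|=1\}$, which is the assertion of the theorem. I expect the delicate point to be the passage from the family $\{E_\omega\}$ of peak sets to a genuine single-point peak function, which is precisely where the two-to-one nature of $\pi$ must be absorbed.
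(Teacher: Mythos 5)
The paper does not actually prove this statement --- it is quoted from \cite{tirtha-sourav} as a preliminary --- so your argument has to stand on its own. Its first half does: the identification $\pi(\mathbb{T}^2)=\{(s,p)\in\Gamma:|p|=1\}$ and the inclusion $b\Gamma\subseteq\pi(\mathbb{T}^2)$ (pull back along $\pi$, apply the maximum principle on $\overline{\mathbb{D}^2}$) are correct and standard. The reverse inclusion, however, contains a genuine gap, and it sits exactly in the parenthetical you passed over: the functions $\Phi_\omega(s,p)=(2\omega p-s)/(2-\omega s)$ with $\omega\in\mathbb{T}$ do \emph{not} belong to $A(\Gamma)$, because the common zero $(2\overline{\omega},\overline{\omega}^{\,2})$ of numerator and denominator is \emph{not} a removable singularity. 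Take $\omega=1$: along the path $(2\cos\theta,1)=\pi(e^{i\theta},e^{-i\theta})\in\Gamma$ one has $\Phi_1\equiv 1$, while along $(1+r,r)=\pi(1,r)\in\Gamma$ with $r\uparrow 1$ one has $\Phi_1\equiv -1$, and both paths converge to $(2,1)\in\Gamma$. So $\Phi_1$ admits no continuous extension to $\Gamma$, and the same failure occurs for every unimodular $\omega$ at $(2\overline{\omega},\overline{\omega}^{\,2})$. Consequently $g_\omega=\tfrac12(1+\overline{\lambda_\omega}\Phi_\omega)$ is not an element of the uniform algebra, the sets $E_\omega$ are not peak sets of $A(\Gamma)$, and the countable-intersection-of-peak-sets step has nothing to act on. (The surrounding machinery is fine: the M\"obius identification showing $\bigcap_\omega E_\omega=\{(s_0,p_0)\}$ is correct, as is the fact that a countable intersection of peak sets on a compact metric space is again a peak set. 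This discontinuity is precisely why Agler and Young formulate their $\Phi_\omega$-criterion for membership in $\Gamma$ with the side condition $|s|\le 2$, and why the magic functions are usually deployed with $|\omega|<1$, where they are continuous on $\Gamma$ but no longer unimodular on $\pi(\mathbb{T}^2)$.)

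The inclusion $\pi(\mathbb{T}^2)\subseteq b\Gamma$ can be rescued without the $\Phi_\omega$'s. Given $(z_1^0,z_2^0)\in\mathbb{T}^2$ and a neighbourhood $U$ of it, set $g_N(w_1,w_2)=\bigl((1+\overline{z_1^0}w_1)(1+\overline{z_2^0}w_2)/4\bigr)^N$ and $h_N=g_N+g_N\circ\sigma$, where $\sigma$ swaps coordinates. Then $h_N$ is a symmetric polynomial, hence $h_N=h_N'\circ\pi$ with $h_N'\in\mathbb{C}[s,p]$; one checks that $|h_N(z_1^0,z_2^0)|\ge 1-c^N$ with $c=|1+\overline{z_1^0}z_2^0|^2/4$, while $|h_N|\le 2m^N$ off $U\cup\sigma(U)$ with $m<1$, so for large $N$ the maximum of $|h_N'|$ over $\Gamma$ is attained only in $\pi(U)$. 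Any closed boundary must therefore meet $\pi(U)$ for every $U$, hence contain $\pi(z_1^0,z_2^0)$. This is essentially the argument behind the quoted theorem (via Agler--Young's computation of $b\Gamma$ as $\pi(\mathbb{T}^2)$); your two-inclusion architecture matches it, but the peak-function step must be carried out with honest members of $A(\Gamma)$.
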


The rational dilation succeeds on the symmetrized bidisc. As a consequence of this, we obtain from literature the following appealing characterizations of a $\Gamma$-contraction.

\begin{thm}[\cite{ay-jfa}, Theorem 1.2 \& \cite{tirtha-sourav}, Theorem 4.3]\label{char}

Let $(S,P)$ be a pair of commuting operators on a Hilbert space
$\mathcal H$. Then the following are equivalent.
\begin{enumerate}
\item $(S,P)$ is a $\Gamma$-contraction ; \item $(S,P)$ is a
complete $\Gamma$-contraction ; \item $\sigma(S,P)\subseteq
\Gamma$ and $\rho(\alpha S,\,{\alpha}^2P)\geq 0,$ for all
$\alpha\in\mathbb D$, where
\[
\rho(S,P)=2(I-P^*P)-(S-S^*P)-(S^*-P^*S)\; ;
\]

\item $\|S\|\leq 2\,,\, \|P\|\leq 1$ and the operator equation
$S-S^*P=D_PXD_P$ has a unique solution $F$ in $\mathcal B(\mathcal
D_P)$ with $\omega(F)\leq 1$, where $D_P=(I-P^*P)^{\frac{1}{2}}$
and $\mathcal B(\mathcal D_P)$ is the algebra of bounded operators
on $\mathcal D_P=\overline{Ran} D_P$.
\end{enumerate}

\end{thm}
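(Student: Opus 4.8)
The plan is to prove the cyclic chain $(2)\Rightarrow(1)\Rightarrow(3)\Rightarrow(4)\Rightarrow(2)$. The implication $(2)\Rightarrow(1)$ is immediate, since scalar polynomials are a special case of matricial ones; the entire content of the theorem is the upgrade of a spectral set to a \emph{complete} spectral set, and I would route through $(3)$ and $(4)$ rather than attempt it directly. The scalar fact that drives everything is the one-parameter family of rational functions
\[
\Phi_\alpha(s,p)=\frac{2\alpha p-s}{2-\alpha s},\qquad \alpha\in\mathbb D,
\]
which have no poles on $\Gamma$ (on $\Gamma$ one has $|s|\le 2$, so $|\alpha s|<2$ for $|\alpha|<1$) and satisfy $\sup_{(s,p)\in\Gamma}|\Phi_\alpha(s,p)|\le 1$; together with the coordinate $p$ these functions detect membership in $\Gamma$, which makes them the natural test functions to feed into the von Neumann inequality (\ref{eqn:I01}).

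For $(1)\Rightarrow(3)$ the spectral condition $\sigma_T(S,P)\subseteq\Gamma$ is part of the hypothesis, so only the positivity requires work. Applying (\ref{eqn:I01}) to $\Phi_\alpha$ gives $\|\Phi_\alpha(S,P)\|=\|(2\alpha P-S)(2I-\alpha S)^{-1}\|\le 1$ for each $\alpha\in\mathbb D$, which I would rewrite as the operator inequality $(2I-\overline\alpha S^*)(2I-\alpha S)-(2\overline\alpha P^*-S^*)(2\alpha P-S)\ge 0$. A direct expansion identifies the left-hand side as $2\rho(\alpha S,\alpha^2 P)-(1-|\alpha|^2)(S-2\alpha P)^*(S-2\alpha P)$, so the contractivity of $\Phi_\alpha(S,P)$ yields $\rho(\alpha S,\alpha^2 P)\ge \tfrac12(1-|\alpha|^2)(S-2\alpha P)^*(S-2\alpha P)\ge 0$ for every $\alpha\in\mathbb D$, which is exactly $(3)$.

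For $(3)\Rightarrow(4)$ the norm bounds $\|S\|\le 2$, $\|P\|\le 1$ are already built into a $\Gamma$-contraction, so the real task is existence and uniqueness of the fundamental operator. Letting $|\alpha|\to 1$ in $\rho(\alpha S,\alpha^2 P)\ge 0$ and using $|\alpha|^4=1$ on $\mathbb T$, I would read off $2D_P^2-\alpha(S-S^*P)-\overline\alpha(S-S^*P)^*\ge 0$ for all unimodular $\alpha$, i.e.\ $\mathrm{Re}\,\big(\alpha(S-S^*P)\big)\le D_P^2$ in every direction, whence $|\langle (S-S^*P)h,h\rangle|\le\|D_Ph\|^2$. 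This form bound lets a Douglas-type argument on $\mathcal D_P=\overline{\mathrm{Ran}}\,D_P$ define a unique $F\in\mathcal B(\mathcal D_P)$ with $S-S^*P=D_PFD_P$, and the same directional inequality transcribes to $\mathrm{Re}(\alpha F)\le I$ for all unimodular $\alpha$, i.e.\ $\omega(F)\le 1$.

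Finally, $(4)\Rightarrow(2)$ is where I expect the real difficulty. Using $F$ together with the fundamental operator $F_*$ of the adjoint pair $(S^*,P^*)$, I would write down a concrete commuting pair $(T,U)$ on the Sch\"affer-type space $\mathcal H\oplus\ell^2(\mathcal D_P)$, with $U$ the minimal isometric dilation of $P$ and $T$ assembled blockwise from $S$, $D_P$, $F$ and $F_*$, and check that $(T,U)$ is a $\Gamma$-isometry dilating $(S,P)$. Extending $(T,U)$ to a $\Gamma$-unitary $(\widetilde S,\widetilde P)$ then produces a commuting \emph{normal} pair whose joint spectrum lies in $b\Gamma\subseteq\Gamma$ and which dilates $(S,P)$; because $\Gamma$ is polynomially convex, the spectral theorem for this normal dilation supplies a completely contractive functional calculus, and compressing to $\mathcal H$ gives $(2)$. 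The main obstacle is precisely verifying that the blockwise $T$ commutes with $U$ and realises a $\Gamma$-isometry: this is the delicate step, and it is exactly where the bound $\omega(F)\le 1$ and the structural relations between $F$ and $F_*$ become indispensable.
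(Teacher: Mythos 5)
First, a point of orientation: the paper does not prove Theorem \ref{char} at all --- it is quoted verbatim as a preliminary from \cite{ay-jfa} and \cite{tirtha-sourav}, so your attempt can only be measured against those sources. Your architecture does match theirs: the functions $\Phi_\alpha(s,p)=(2\alpha p-s)/(2-\alpha s)$ are exactly the Agler--Young ``magic functions'', your operator identity
\[
(2I-\overline\alpha S^*)(2I-\alpha S)-(2\overline\alpha P^*-S^*)(2\alpha P-S)=2\rho(\alpha S,\alpha^2P)-(1-|\alpha|^2)(S-2\alpha P)^*(S-2\alpha P)
\]
checks out, and the route boundary limit $\rightarrow$ numerical-radius form bound $\rightarrow$ fundamental operator $\rightarrow$ explicit Sch\"affer-type $\Gamma$-isometric dilation is precisely the Bhattacharyya--Pal--Shyam Roy strategy. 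So the roadmap is right.

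There are, however, two genuine gaps. (a) In $(3)\Rightarrow(4)$ you dismiss the norm bounds with ``$\|S\|\le 2$, $\|P\|\le 1$ are already built into a $\Gamma$-contraction'' --- but under hypothesis $(3)$ you do not yet know that $(S,P)$ is a $\Gamma$-contraction; that is what the cycle is supposed to establish, so the remark is circular. The bound $\|P\|\le 1$ is recoverable cheaply (average $\rho(re^{i\theta}S,r^2e^{2i\theta}P)\ge 0$ over $\theta$ and let $r\to 1$ to get $I-P^*P\ge 0$), but $\|S\|\le 2$ does not follow from the positivity condition by any such soft argument: the boundary limit only yields $\omega(S-S^*P)\le 1$, which gives $\|S\|\le 2+\|S\|\|P\|$ and nothing better. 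Extracting $\|S\|\le 2$ from $(3)$ is essentially the full strength of Agler--Young's hard implication, and your outline does not engage with it; you would need either to reorder the cycle so that $(4)$ is deduced from $(1)$ together with $(3)$, or to supply that argument. (b) The implication $(4)\Rightarrow(2)$ is entirely deferred: you correctly identify that verifying commutativity of the block operator $T$ with $U$ and that $(T,U)$ is a $\Gamma$-isometry (i.e.\ $U^*U=I$, $U^*T=T^*$ and a norm or spectral-radius bound on $T$, where $\omega(F)\le 1$ enters) is ``the delicate step'', but that verification \emph{is} the theorem being proved, so as written this arrow is asserted rather than established. A smaller loose end in the same step: under hypothesis $(4)$ alone you have the fundamental operator $F$ of $(S,P)$, but the existence of $F_*$ for $(S^*,P^*)$, which you invoke, is not given; the standard fix is to build the $\Gamma$-isometric dilation from $F$ alone and then extend the $\Gamma$-isometry to a $\Gamma$-unitary via Theorem \ref{thm:char-gamma-isometry}.
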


Apart from the geometric definition of a $\Gamma$-unitary, the following theorem from \cite{ay-jot} describes a $\Gamma$-unitary algebraically in different ways.

\begin{thm}[\cite{ay-jot}, Theorem 2.2]\label{thm:char-gamma-unitary}
Let $S,P$ be commuting operators on a Hilbert space $\mathcal H$. Then the following are equivalent:
\begin{itemize}
\item[(i)] $(S,P)$ is a $\Gamma$-unitary ;

\item[(ii)] $P^*P=I=PP^*$ and $P^*S=S^*$ and $\|S\|\leq 2$ ;

\item[(iii)] there exist commuting unitary operators $U_1,U_2$ on $\mathcal H$ such that $S=U_1+U_2$ and $P=U_1U_2$.
\end{itemize}
\end{thm}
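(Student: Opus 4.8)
The plan is to prove the three statements equivalent as a cycle, (iii) $\Rightarrow$ (i) $\Rightarrow$ (ii) $\Rightarrow$ (iii), using throughout the joint spectral theorem for a commuting pair of normal operators together with the pointwise descriptions of $\Gamma$ and $b\Gamma$ from Theorems \ref{thm:char-gamma} and \ref{thm:char-DB}. The two short directions are the ones that pass from a concrete structure to the spectral constraints; the genuine work is the reverse construction.

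For (iii) $\Rightarrow$ (i): given commuting unitaries $U_1,U_2$, both $S=U_1+U_2$ and $P=U_1U_2$ are normal, since sums and products of commuting normal operators are normal. By the spectral mapping theorem for the Taylor joint spectrum of a commuting normal pair under the polynomial map $\pi$, I would obtain $\sigma_T(S,P)=\pi(\sigma_T(U_1,U_2))\subseteq \pi(\mathbb T^2)$. Every point $(\lambda_1+\lambda_2,\lambda_1\lambda_2)$ with $|\lambda_1|=|\lambda_2|=1$ lies in $\Gamma$ and has modulus-one second coordinate, hence lies in $b\Gamma$ by Theorem \ref{thm:char-DB}, giving (i). For (i) $\Rightarrow$ (ii): a $\Gamma$-unitary $(S,P)$ is a commuting normal pair, so it carries a joint spectral measure $E$ supported on $b\Gamma$ with $S=\int s\,dE$ and $P=\int p\,dE$. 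Since $|p|=1$ on $b\Gamma$, integration gives $P^*P=PP^*=I$; since $|s|\le 2$ there, $\|S\|\le 2$; and the pointwise identity $\bar s=\bar p\, s$ on $b\Gamma$ (immediate from $s=\lambda_1+\lambda_2$, $p=\lambda_1\lambda_2$) integrates to $S^*=P^*S$.

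For (ii) $\Rightarrow$ (iii): first I would check that $S$ is \emph{automatically} normal. Taking adjoints in $P^*S=S^*$ yields $S=S^*P$, and because $P$ is unitary, $S$ commutes with $P^*=P^{-1}$; hence $SS^*=SP^*S=P^*S^2=S^*S$. Thus $(S,P)$ is a commuting normal pair with a joint spectral measure $E$. On its support the constraints $|p|=1$, $\bar s=\bar p\, s$ and $|s|\le 2$ hold pointwise, and since $s-\bar s p=s-s|p|^2=0$, criterion (ii) of Theorem \ref{thm:char-gamma} forces $(s,p)\in\Gamma$; combined with $|p|=1$ this places the support inside $b\Gamma$. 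On $b\Gamma$ each point equals $\pi(\lambda_1,\lambda_2)$ with $|\lambda_i|=1$, the $\lambda_i$ being the roots of $t^2-st+p$. Choosing a measurable square root $\Delta(s,p)$ of $s^2-4p$ and setting $\Delta=\int\Delta(s,p)\,dE$, $U_1=\tfrac12(S+\Delta)$, $U_2=\tfrac12(S-\Delta)$, I would get operators lying in the commutative algebra generated by the functional calculus of $(S,P)$, with $U_1+U_2=S$ and $U_1U_2=\tfrac14(S^2-\Delta^2)=\tfrac14\big(S^2-(S^2-4P)\big)=P$, and $U_1,U_2$ unitary because pointwise they assume the unimodular values $\lambda_1,\lambda_2$ in some order.

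The main obstacle is this construction in (ii) $\Rightarrow$ (iii). Three points need care: justifying normality of $S$ so that the joint Borel functional calculus is available; confirming that the pointwise algebra pins the support down to $b\Gamma$ rather than merely to $\Gamma$ (the vanishing of $s-\bar s p$ is what does this); and, above all, selecting the square root $\Delta(s,p)$ \emph{measurably}. The pleasant feature is that unitarity of $U_1,U_2$ is branch-independent, since both roots of $t^2-st+p$ are automatically unimodular on $b\Gamma$, so the only genuine issue is measurability of the branch, which is standard. The remaining identities are routine functional-calculus computations.
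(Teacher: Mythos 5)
Your argument is correct. Note, however, that the paper itself gives no proof of this statement: it is quoted verbatim from Agler--Young (\cite{ay-jot}, Theorem 2.2) as a preliminary, so there is no in-paper proof to compare against. Your route --- the cycle (iii) $\Rightarrow$ (i) $\Rightarrow$ (ii) $\Rightarrow$ (iii) via the Taylor spectral mapping theorem, the joint spectral measure of a commuting normal pair, and a measurable branch of $\sqrt{s^2-4p}$ to extract $U_1,U_2$ --- is essentially the original Agler--Young argument. The three delicate points you flag are exactly the right ones, and you handle each correctly: normality of $S$ from $S=S^*P$ together with $SP^*=P^*S$ (valid since $P$ is invertible); the identity $s-\bar sp=s-s|p|^2=0$ pinning the support to $b\Gamma$ via parts (ii) of Theorem \ref{thm:char-gamma} and Theorem \ref{thm:char-DB}; and branch-independence of unitarity because both roots of $t^2-st+p$ are unimodular on $b\Gamma$.
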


The following is a structure theorem for a $\Gamma$-isometry which also gives a few characterizations of it.

\begin{thm}[\cite{ay-jot}, Theorem 2.6] \label{thm:char-gamma-isometry}
Let $S,P$ be commuting operators on a Hilbert space $\mathcal H$. Then the following statements are equivalent:
\begin{itemize}

\item[(i)] $(S,P)$ is a $\Gamma$-isometry ;

\item[(ii)] there is an orthogonal decomposition $\mathcal H =\mathcal H_1 \oplus \mathcal H_2$ into common reducing subspaces of $S$ and $P$ such that $(S|_{\mathcal H_1}, P|_{\mathcal H_1})$ is a $\Gamma$-unitary and $(S|_{\mathcal H_2},P|_{\mathcal H_2})$ is a pure $\Gamma$-isometry ;

\item[(iii)] $P^*P=I$ and $P^*S=S^*$ and $\|S\|\leq 2$.

\end{itemize}

\end{thm}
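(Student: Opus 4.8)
The plan is to prove the equivalences cyclically: I would first read off (iii) as the restriction of the $\Gamma$-unitary relations, then recover (i) and (ii) from (iii) by a Wold-type analysis of the isometry $P$, leaning throughout on the algebraic description of $\Gamma$-unitaries in Theorem \ref{thm:char-gamma-unitary}(ii).

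The implication (i)$\Rightarrow$(iii) is a routine restriction computation. If $(S,P)=(\widetilde S|_{\mathcal H},\widetilde P|_{\mathcal H})$ for a $\Gamma$-unitary $(\widetilde S,\widetilde P)$ on $\mathcal K\supseteq\mathcal H$ with $\mathcal H$ invariant under $\widetilde S,\widetilde P$, then $\widetilde P$ is unitary and $\mathcal H$ is $\widetilde P$-invariant, so for $h\in\mathcal H$ we have $\|Ph\|=\|\widetilde P h\|=\|h\|$, i.e. $P^*P=I$; likewise $\|S\|\le\|\widetilde S\|\le 2$. The relation $P^*S=S^*$ follows by testing on $h,k\in\mathcal H$: using the invariance of $\mathcal H$ under $\widetilde S,\widetilde P$ together with the $\Gamma$-unitary identity $\widetilde P^*\widetilde S=\widetilde S^*$ from Theorem \ref{thm:char-gamma-unitary}(ii), one gets $\langle P^*Sh,k\rangle=\langle\widetilde Sh,\widetilde Pk\rangle=\langle\widetilde S^*h,k\rangle=\langle h,\widetilde Sk\rangle=\langle S^*h,k\rangle$.

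For (iii)$\Rightarrow$(ii) I would start from $P^*P=I$ (so $P$ is an isometry), $P^*S=S^*$ (equivalently $S^*=P^*S$ and $S=P^*SP$), and $\|S\|\le 2$, and take the Wold decomposition $\mathcal H=\mathcal H_1\oplus\mathcal H_2$ of $P$, where $\mathcal H_1=\bigcap_{n\ge0}P^n\mathcal H$ carries the unitary part and $\mathcal H_2=\bigoplus_{n\ge0}P^n\mathcal L$, with $\mathcal L=\mathcal H\ominus P\mathcal H=\ker P^*$, carries the pure isometric part; both reduce $P$. The crux is that these subspaces also reduce $S$, and here commutativity is decisive: for $x\in\mathcal H_1$ write $x=P^nz_n$ with $z_n\in\mathcal H_1$, so that $Sx=SP^nz_n=P^nSz_n\in P^n\mathcal H$ for every $n$, whence $Sx\in\mathcal H_1$; and the relation $S^*=P^*S$ gives $S^*\mathcal H_1=P^*S\mathcal H_1\subseteq P^*\mathcal H_1\subseteq\mathcal H_1$. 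Thus $\mathcal H_1$ reduces $S$, hence so does $\mathcal H_2=\mathcal H_1^\perp$. On $\mathcal H_1$ the operator $P$ is unitary, the relation $P^*S=S^*$ descends, and $\|S\|\le 2$, so Theorem \ref{thm:char-gamma-unitary}(ii) identifies $(S|_{\mathcal H_1},P|_{\mathcal H_1})$ as a $\Gamma$-unitary; on $\mathcal H_2$ the restricted pair again satisfies (iii), now with $P$ a pure isometry, which (granting the $\Gamma$-isometry property proved below) is exactly a pure $\Gamma$-isometry. Finally (ii)$\Rightarrow$(i) is clean: a $\Gamma$-unitary is a $\Gamma$-isometry, a pure $\Gamma$-isometry is one by definition, and the orthogonal direct sum of two $\Gamma$-isometries is a $\Gamma$-isometry, since the direct sum of their $\Gamma$-unitary extensions satisfies the hypotheses of Theorem \ref{thm:char-gamma-unitary}(ii).

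The step I expect to be the real obstacle is the extension needed to close the loop, namely (iii)$\Rightarrow$(i): given a pair satisfying (iii) one must produce a $\Gamma$-unitary extension. On the unitary part nothing is added, so the difficulty sits on the pure part, where I would extend the unilateral shift $P$ to the bilateral shift $U$ on $\mathcal K=\bigoplus_{n\in\mathbb Z}P^n\mathcal L$ and then extend $S$ to an operator $\widetilde S$ on $\mathcal K$ that commutes with $U$ and restricts to $S$ on $\mathcal H$. The delicate matter is to define $\widetilde S$ so that the $\Gamma$-unitary relations $U^*\widetilde S=\widetilde S^*$ and $\|\widetilde S\|\le 2$ hold on all of $\mathcal K$; this is where the full strength of $S=P^*SP$ and the bound $\|S\|\le 2$ are used, and where one is naturally led to encode the action of $S$ on the wandering data $\mathcal L$ by the fundamental operator $F$ of Theorem \ref{char}(4) and a symbol of the form $F^*+Fz$, the pointwise bound $\|F^*+Fe^{i\theta}\|\le 2$ being equivalent to $\omega(F)\le 1$. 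Verifying this bound and checking that the resulting $(\widetilde S,U)$ is a genuine $\Gamma$-unitary keeping $\mathcal H$ invariant is the technical heart of the proof.
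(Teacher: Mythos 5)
You should first note that the paper offers no proof of this statement: it is quoted verbatim as a known result (Theorem 2.6 of Agler--Young, \cite{ay-jot}) in the Preliminaries, so there is no in-paper argument to compare against. Measured against the original Agler--Young proof, your outline follows exactly the same route: (i)$\Rightarrow$(iii) by restricting the identities $\widetilde P^*\widetilde P=I$, $\widetilde P^*\widetilde S=\widetilde S^*$, $\|\widetilde S\|\le 2$ to the invariant subspace; (iii)$\Rightarrow$(ii) by the Wold decomposition of the isometry $P$, with the observation that $\mathcal H_1=\bigcap_n P^n\mathcal H$ is invariant under $S$ (via $SP^n=P^nS$) and under $S^*=P^*S$; and (ii)$\Rightarrow$(i) by summing $\Gamma$-unitary extensions. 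These parts of your argument are correct and complete.

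The genuine gap is the implication (iii)$\Rightarrow$(i) on the pure part, which you explicitly defer as ``the technical heart'' without carrying it out --- and your (iii)$\Rightarrow$(ii) step also leans on it, since identifying $(S|_{\mathcal H_2},P|_{\mathcal H_2})$ as a pure $\Gamma$-\emph{isometry} (rather than merely a pair satisfying (iii) with $P$ a shift) requires knowing that (iii) implies the existence of a $\Gamma$-unitary extension. What is missing concretely: identify $\mathcal H_2$ with $H^2(\mathcal L)$ so that $P$ becomes the shift $M_z$; show that an operator commuting with the shift is a block analytic Toeplitz operator $T_\varphi$ with $\varphi(z)=\sum_{n\ge 0}A_nz^n$; show that the relation $S^*=P^*S$ forces $A_n=0$ for $n\ge 2$ and $A_1=A_0^*$, so that $\varphi(z)=F^*+Fz$ for a single $F\in\mathcal B(\mathcal L)$; verify that $\|S\|=\sup_\theta\|F^*+e^{i\theta}F\|\le 2$ (equivalently $\omega(F)\le 1$); and check that the multiplication pair $\bigl(M_{F^*+Fz},M_z\bigr)$ on $L^2(\mathcal L)$ satisfies condition (ii) of Theorem \ref{thm:char-gamma-unitary}, hence is a $\Gamma$-unitary leaving $H^2(\mathcal L)$ invariant and restricting to $(S,P)$. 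Each of these steps is routine but none is actually performed in your proposal, so as written the cycle of implications does not close.
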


\vspace{0.18cm}

\section{$\Gamma$-contractions as symmetrization of commuting contractions}\label{sec:02}

\vspace{0.5cm}

\noindent Recall that the
points in $\Gamma$ are nothing but the symmetrization of the points in $\overline{\mathbb D^2}$. It was shown in
\cite{ay-jfa} (see \cite{ay-jfa}, Section 2) by an application
of Schur's theorem that $\pi^{-1}(\Gamma)=\overline{\mathbb D^2}$.
In fact, corresponding to every point $(s,p)$ in $\C^2$,
there are two points $z_1=\frac{1}{2}(s+\sqrt{s^2-4p})$ and
$z_2=\frac{1}{2}(s-\sqrt{s^2-4p}) \}$
such that $\pi^{-1}(s,p)=\{ (z_1,z_2),(z_2,z_1) \}$. So, when $(s,p)\in\Gamma$, the points $(z_1,z_2)$ and
$(z_2,z_1)$ are in $\overline{\mathbb D^2}$. Unlike the scalars,
the picture is different when we consider operator pairs $(S,P)$
that have $\Gamma$ as a spectral set. Not every
$\Gamma$-contraction is a symmetrization of two commuting
operators. We mention here an example given by Agler and Young (see \cite{ay-jot}, Example 1.7). If $P$ is a contraction then by
part-(3) of Theorem \ref{char} $(0,P)$ is a $\Gamma$-contraction.
Now if $-P$ does not have any square root then $(0,P)$ can not be expressed as
$(T_1+T_2,T_1T_2)$ for two operators $T_1,T_2$.
Since all $\Gamma$-contractions do not arise as symmetrization of
a pair of commuting contractions, it is worth finding out a
characterization for the $\Gamma$-contractions which are
symmetrization of two commuting contractions. A possible direction was proposed by Agler and Young in \cite{ay-jot} without a proof which we state here: a $\Gamma$-contraction $(S,P)$ is the symmetrization of a pair of commuting contractions $T_1,T_2$ if and only if $S^2-4P$
has a square root say $\Delta$ that commutes with $S$ and $P$. This proposition leads us to the right conclusion at the cost of a norm bound for $S\pm \Delta$ as we see in Theorem \ref{lem:char} below. Also, the norm bound on $\|S \pm \Delta \|$ cannot be ignored (see Example \ref{exm:t01}). If $(S,P)$ is a $\Gamma$-contraction such that
$S^2-4P$ possesses a square root say $\sqrt{S^2-4P}$ that commutes with $S,P$
and if $S=T_1+T_2\, , \, P=T_1T_2$ then a canonical choice for
$T_1,T_2$ is that

\[
T_1 = \dfrac{1}{2}\{ S+\sqrt{S^2-4P} \}\;,\; T_2 = \dfrac{1}{2}\{
S-\sqrt{S^2-4P} \} \,.
\]
But, we can not conclude anything about the norms of such
$T_1, T_2$. This is because unlike the scalar case, the fact that
the symmetrization of a pair of commuting operators $T_1,T_2$ is a
$\Gamma$-contraction does not imply that $T_1$ and $T_2$ are
contractions. For example, let us consider the commuting matrices
\begin{equation} \label{eqn:fu01}
T_1 =\begin{bmatrix}
0 & z \\
0 & 0
\end{bmatrix}\;,\;
T_2 = \begin{bmatrix}
0 & -z \\
0 & 0
\end{bmatrix} \qquad (z\in \C),
\end{equation}
which are not contractions but $(T_1+T_2,T_1T_2)=(0,0)$
which is a $\Gamma$-contraction by part-(3) of Theorem \ref{char}. So, let us characterize all $\Gamma$-contractions that are symmetrization of pairs of commuting contractions.

\begin{thm}\label{lem:char}
Let $(S,P)$ be a $\Gamma$-contraction acting on a Hilbert space
$\mathcal H$. Then $S=T_1+T_2$ and $P=T_1T_2$ for a pair of
commuting contractions $T_1,T_2$ on $\mathcal H$ if and only if
$S^2-4P$ has a square root $\Delta$ such that

\begin{itemize}
\item[(i)] $\Delta$ commutes with $S$ and $P$ \item[(ii)] $\|S \pm
\Delta\| \leq 2$.
\end{itemize}

\end{thm}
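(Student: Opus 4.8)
The plan is to prove both implications, treating the forward direction (existence of $T_1,T_2$ implies a commuting square root with the norm bound) as the routine half and the reverse direction as the substantive one. For the forward direction, suppose $S = T_1 + T_2$ and $P = T_1 T_2$ with $T_1, T_2$ commuting contractions. I would set $\Delta = T_1 - T_2$. Since $T_1, T_2$ commute, a direct computation gives $\Delta^2 = (T_1 - T_2)^2 = T_1^2 - 2T_1T_2 + T_2^2 = (T_1+T_2)^2 - 4T_1T_2 = S^2 - 4P$, so $\Delta$ is a square root of $S^2 - 4P$. Because $S = T_1 + T_2$, $P = T_1 T_2$, and $\Delta = T_1 - T_2$ are all polynomials in the commuting operators $T_1, T_2$, they mutually commute, giving (i). For (ii), note $S + \Delta = 2T_1$ and $S - \Delta = 2T_2$, so $\|S \pm \Delta\| = 2\|T_{1,2}\| \leq 2$ since each $T_i$ is a contraction.

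The reverse direction is where the real content lies, and here I would essentially run the forward computation backwards using the \emph{canonical choice} flagged in the text. Given a square root $\Delta$ of $S^2 - 4P$ satisfying (i) and (ii), define
\[
T_1 = \tfrac{1}{2}(S + \Delta), \qquad T_2 = \tfrac{1}{2}(S - \Delta).
\]
Then immediately $T_1 + T_2 = S$. For the product, I would compute $T_1 T_2 = \tfrac{1}{4}(S+\Delta)(S-\Delta) = \tfrac{1}{4}(S^2 - \Delta^2)$, and here the commutativity of $\Delta$ with $S$ from (i) is what lets me collapse $(S+\Delta)(S-\Delta)$ to $S^2 - \Delta^2$ without a leftover cross-term $\Delta S - S \Delta$; then $\Delta^2 = S^2 - 4P$ gives $T_1 T_2 = \tfrac{1}{4}(S^2 - (S^2 - 4P)) = P$. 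Commutativity of $T_1$ and $T_2$ follows because both are affine combinations of $S$ and $\Delta$, which commute with each other by (i). Finally, the contraction bound is the direct translation of (ii): $\|T_1\| = \tfrac{1}{2}\|S + \Delta\| \leq 1$ and $\|T_2\| = \tfrac{1}{2}\|S - \Delta\| \leq 1$.

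The step I expect to require the most care is verifying that $\Delta$ genuinely commutes with $S$ (not merely with $P$) when I invoke it to simplify the product $T_1 T_2$ and to establish $[T_1, T_2] = 0$; this is exactly why hypothesis (i) explicitly demands commutativity with \emph{both} $S$ and $P$, and I would be careful to cite it at each use. The other point worth stating clearly, rather than a genuine obstacle, is why the norm condition (ii) cannot be dropped: the hypothesis that $(S,P)$ is a $\Gamma$-contraction only forces $\|S\| \leq 2$ and $\|P\| \leq 1$, which does not by itself bound $\|S \pm \Delta\|$, since $\Delta$ is an independently chosen square root. I would note that the example \ref{exm:t01} promised in the surrounding text supplies a $\Gamma$-contraction with a commuting square root $\Delta$ for which $\|S + \Delta\| > 2$, confirming that (ii) is not automatic and hence must appear as a genuine hypothesis rather than a consequence of the $\Gamma$-contraction property.
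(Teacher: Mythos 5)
Your proposal is correct and follows essentially the same route as the paper: the forward direction takes $\Delta = T_1 - T_2$, and the converse uses the canonical choice $T_1 = \tfrac{1}{2}(S+\Delta)$, $T_2 = \tfrac{1}{2}(S-\Delta)$, with the norm hypothesis (ii) translating directly into $T_1, T_2$ being contractions. Your version spells out the algebra (e.g.\ $(S+\Delta)(S-\Delta) = S^2 - \Delta^2$ via (i)) slightly more explicitly than the paper does, but there is no substantive difference.
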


\begin{proof}
Let there be two commuting contractions $T_1,T_2$ such that
$S=T_1+T_2$ and $P=T_1T_2$. Then $S^2-4P=(T_1-T_2)^2$ and clearly
$S^2-4P$ has a square root namely $\Delta=T_1-T_2$ which commutes
with $S,P$ and the norm conditions $\|S \pm \Delta\| \leq 2$ are
satisfied.

Conversely, suppose that $S^2-4P$ has a square root $\Delta$ that
commutes with $S,P$ and satisfies $\|S \pm \Delta\| \leq 2$.
Setting $T_1=\frac{1}{2}(S+\Delta)$ and
$T_2=\frac{1}{2}(S-\Delta)$ we see that $T_1, T_2$ commute as
$\Delta$ commutes with $S$ and $P$. The fact that $T_1, T_2$ are
contractions follows from the inequalities $\|S \pm \Delta\| \leq
2$. Also $T_1+T_2=S$ and $T_1T_2=P$.
\end{proof}

\noindent \textbf{\textit{Non-uniqueness}.} Let $\mathcal F$ be the class of $\Gamma$-contractions that are symmetrization of pairs of commuting contractions. Unlike the scalars $(s,p)\in\Gamma$, a $\Gamma$-contraction $(S,P)\in\mathcal F$ may not arise as the symmetrization of a unique pair of commuting contractions. In fact there could be infinitely many such pairs as we witnessed in (\ref{eqn:fu01}).\\

The condition-(ii), i.e., $\| S\pm \Delta \|\leq 2$ in Theorem \ref{lem:char} cannot be ignored. The following example will verify this. Before going to the example we state an useful result from \cite{ay-jot} which will be used below.
\begin{lem} [\cite{ay-jot}, Corollary 1.9] \label{lem:AY01}
Let $(S,P)$ be a commuting pair of operators such that $\|P\| < 1$ and the spectral radius of $S$ is less than $2$. Then $(S,P)$ is a $\Gamma$-contraction if and only if 
\[
\omega \left( (I-P^*P)^{-\frac{1}{2}} (S-S^*P) (I-P^*P)^{- \frac{1}{2}} \right) \leq 1.
\]

\end{lem}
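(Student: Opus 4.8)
Since $\|P\|<1$, the operator $I-P^*P$ is bounded below, so $D_P=(I-P^*P)^{1/2}$ is invertible, $\mathcal D_P=\mathcal H$, and
\[
G:=(I-P^*P)^{-\frac12}(S-S^*P)(I-P^*P)^{-\frac12}=D_P^{-1}(S-S^*P)D_P^{-1}
\]
is the unique bounded operator satisfying the fundamental equation $S-S^*P=D_PGD_P$; it is precisely the operator whose numerical radius is at issue. The forward implication is then immediate: if $(S,P)$ is a $\Gamma$-contraction, part-(4) of Theorem \ref{char} supplies a fundamental operator $F$ with $\omega(F)\le1$ and $S-S^*P=D_PFD_P$, and invertibility of $D_P$ forces $F=D_P^{-1}(S-S^*P)D_P^{-1}=G$, so $\omega(G)\le1$.

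For the converse I would assume $\omega(G)\le1$ and verify part-(3) of Theorem \ref{char}, that is $\sigma_T(S,P)\subseteq\Gamma$ together with $\rho(\alpha S,\alpha^2P)\ge0$ for every $\alpha\in\mathbb{D}$; note that (3) does not demand $\|S\|\le2$ as a separate hypothesis, so this bound will emerge for free once (3) is established. The computational core is the positivity. Expanding $\rho(\alpha S,\alpha^2P)$ and eliminating $S^*P,\,P^*S$ through $S^*P=S-D_PGD_P$ and $P^*S=S^*-D_PG^*D_P$ yields, after regrouping, the identity
\[
\rho(\alpha S,\alpha^2P)=|\alpha|^2\,D_P\big(2I-\alpha G-\overline{\alpha}G^*\big)D_P+(1-|\alpha|^2)\big(2I+2|\alpha|^2P^*P-\alpha S-\overline{\alpha}S^*\big).
\]
On the unit circle $|\alpha|=1$ the second summand disappears and one is left with $\rho(\alpha S,\alpha^2P)=D_P(2I-\alpha G-\overline{\alpha}G^*)D_P$, which, since $D_P$ is invertible, is nonnegative for all unimodular $\alpha$ exactly when $2I-\alpha G-\overline{\alpha}G^*\ge0$ for all such $\alpha$, i.e. exactly when $\omega(G)\le1$. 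Thus the boundary positivity is equivalent to the stated numerical-radius condition.

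The main obstacle is to propagate the positivity into the open disc, where the first summand is still controlled by $\omega(G)\le1$ but the second summand retains the genuinely $S$-dependent term $-(1-|\alpha|^2)(\alpha S+\overline{\alpha}S^*)$, which a single application of $\omega(G)\le1$ does not dominate. The plan is to use the full operatorial strength of $\omega(G)\le1$, namely $|\langle(S-S^*P)u,u\rangle|\le\|u\|^2-\|Pu\|^2$ for all $u$, polarised so as to control the cross term $\langle Px,Sx\rangle$, with $\|P\|<1$ providing the quantitative slack; I expect this to reduce the radial minimum of $\alpha\mapsto\langle\rho(\alpha S,\alpha^2P)x,x\rangle$ to its boundary value, and this is the step I anticipate being the most delicate. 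Finally, for $\sigma_T(S,P)\subseteq\Gamma$ I would invoke the projection property of the Taylor spectrum: any $(s,p)\in\sigma_T(S,P)$ satisfies $|s|\le r(S)<2$ and $|p|\le r(P)\le\|P\|<1$, so by part-(ii) of Theorem \ref{thm:char-gamma} only $|s-\overline{s}p|+|p|^2\le1$ remains, which follows from the scalar specialisation of the positivity just obtained. With both halves of (3) in hand, Theorem \ref{char} gives that $(S,P)$ is a $\Gamma$-contraction, and in particular $\|S\|\le2$, completing the argument.
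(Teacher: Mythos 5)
Your forward direction is fine: with $\|P\|<1$ the operator $D_P$ is invertible, the fundamental equation has the unique solution $G=D_P^{-1}(S-S^*P)D_P^{-1}$, and part-(4) of Theorem \ref{char} gives $\omega(G)\leq 1$. (Note, for calibration, that the paper itself does not prove this lemma at all; it imports it verbatim from \cite{ay-jot}, Corollary 1.9, so your argument has to stand on its own.) The converse, however, contains a genuine gap, and it is exactly the step you defer as ``the most delicate'': your decomposition of $\rho(\alpha S,\alpha^2 P)$ is algebraically correct and settles $|\alpha|=1$, but the passage to $\alpha\in\mathbb D$ is the entire content of the implication, and the strategy you sketch provably cannot deliver it. Applying $\omega(G)\leq 1$ optimally to the $G$-term (write $S-|\alpha|^2S^*P=(1-|\alpha|^2)S+|\alpha|^2D_PGD_P$) leaves
\[
\langle \rho(\alpha S,\alpha^2P)x,x\rangle\;\geq\; 2(1-|\alpha|^2)\left(\|x\|^2+|\alpha|^2\|Px\|^2-\mathrm{Re}\,\alpha\langle Sx,x\rangle\right),
\]
and the bracket on the right is genuinely negative for $\Gamma$-contractions satisfying all the lemma's hypotheses: take $S=2\beta I$, $P=\beta^2 I$ with $|\beta|$ close to $1$ (then $\|P\|<1$, $r(S)<2$, and $\omega(G)=2|\beta|/(1+|\beta|^2)\leq 1$), and $\alpha$ phase-aligned with $\overline\beta$; e.g.\ $\beta=\alpha=0.9$ gives $1+|\alpha|^2|\beta|^4-2|\alpha||\beta|\approx-0.09$. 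Equivalently, in your own decomposition the second summand is strictly negative at such points while the total is nonnegative: interior positivity arises from cancellation between the two summands, i.e.\ from phase coherence between $\langle Sx,x\rangle$ and $\langle GD_Px,D_Px\rangle$, so any scheme that estimates the pieces separately --- including the polarized form $|\langle(S-S^*P)u,u\rangle|\leq\|u\|^2-\|Pu\|^2$ followed by Cauchy--Schwarz on $\langle Px,Sx\rangle$ --- is structurally too lossy. Nor is there a maximum-principle shortcut: $\alpha\mapsto\langle\rho(\alpha S,\alpha^2P)x,x\rangle$ involves $|\alpha|^2$ and $|\alpha|^4$ and is not harmonic, and $\omega(S)$ can be arbitrarily close to $2$.

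What actually closes the converse in the literature is dilation, not estimation: Agler and Young derive their Corollary 1.9 from the hereditary-positivity characterization of $\Gamma$-contractions, whose sufficiency rests on the commutant lifting theorem of \cite{ay-jfa}; alternatively, $\omega(F)\leq 1$ is precisely the hypothesis under which \cite{tirtha-sourav} manufactures an explicit $\Gamma$-isometric dilation, whence the von Neumann inequality. Be aware also that you cannot shortcut through part-(4) of Theorem \ref{char}, since it presupposes $\|S\|\leq 2$ whereas the lemma assumes only $r(S)<2$ --- this mismatch is part of what the dilation input absorbs. A secondary soft spot: your ``scalar specialisation'' for $\sigma_T(S,P)\subseteq\Gamma$ is not a substitution, because $s-\overline{s}p$ is not holomorphic and no spectral mapping applies; it can be repaired by testing the boundary positivity $D_P(2I-\alpha G-\overline\alpha G^*)D_P\geq 0$ against approximate joint eigenvectors on the boundary of $\sigma_T(S,P)$ and then invoking polynomial convexity of $\Gamma$, but that should be done explicitly. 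In short: the forward half and the boundary identity are right, the spectral step is repairable, but the interior positivity is a missing idea --- essentially the rational dilation theorem for $\Gamma$ --- rather than a deferred computation.
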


\begin{eg} \label{exm:t01}
 Let us consider the following $2 \times 2$ commuting scalar matrices
\[
S_{\epsilon} = \begin{bmatrix}
{\epsilon} & {\epsilon} \\
0 & 0
\end{bmatrix}
, \quad {\epsilon}>0 \quad \text{ and } \quad 
P = \begin{bmatrix}
0 & 0 \\
0 & 0
\end{bmatrix}.
\]
Clearly $\| S_{\epsilon} \| \geq \sqrt{2} {\epsilon} $ (Considering the action of $S_{\epsilon}$ on the transpose of the vector $[\frac{1}{\sqrt 2}, \frac{1}{\sqrt 2}]$.) Now we calculate the numerical radius of $S_{\epsilon}$. Let $(\alpha , \beta) \in \mathbb C^2$ be arbitrary with $\|(\alpha, \beta) \|=1$, i.e. $|\alpha|^2+|\beta|^2=1$. Now
\[
\left| \left\langle 
\begin{bmatrix}
{\epsilon} & {\epsilon} \\
0 & 0
\end{bmatrix} 
\begin{bmatrix}
\alpha \\
\beta
\end{bmatrix} \,,\;
\begin{bmatrix}
\alpha \\
\beta
\end{bmatrix} 
\right\rangle
\right|
=\left| {\epsilon}(|\alpha|^2+ \beta \bar{\alpha}) \right|
\leq {\epsilon} |\alpha|(|\alpha|+|\beta|).
\]
Finding the maximum value of $|\alpha|(|\alpha|+|\beta|)$, where $|\alpha|^2+|\beta|^2=1$, is same as
finding the maximum value of $f(\theta)=\sin \theta (\sin \theta + \cos \theta)$. Evidently
\begin{align*}
\sin \theta (\sin \theta + \cos \theta) & = \sqrt 2 \sin \theta \sin(\theta + \frac{\pi}{4}) \\
& = \dfrac{\sqrt 2}{2} \left\{\cos \frac{\pi}{4} - \cos (2\theta + \frac{\pi}{4})  \right\} \\
& = \dfrac{\sqrt 2}{2} \left\{\dfrac{1}{\sqrt 2} - \cos (2\theta + \frac{\pi}{4})  \right\},
\end{align*}
which is maximum when $\cos (2\theta + \frac{\pi}{4})=-1$. Hence the maximum value of $\sin \theta (\sin \theta + \cos \theta)$ is equal to $\dfrac{\sqrt 2}{2}\left( \dfrac{1}{\sqrt 2} + 1 \right)=\dfrac{\sqrt 2 +1}{2} \approx 1.207 < 1.22$. Therefore, $\omega (S_{\epsilon})< 1.22 {\epsilon}$ for any ${\epsilon} >0$. Choosing $\widetilde {\epsilon}=\dfrac{1}{1.3}$ we see that $\omega (S_{\widetilde{\epsilon}})< 1$ and that $\| S_{\widetilde{\epsilon}}\| \geq \dfrac{\sqrt 2}{1.3} >1$. Thus, $(S_{\widetilde{\epsilon}}, P)$ is a pair of commuting operators such that $\|P\|<1$, $r(S_{\widetilde{\epsilon}})<2$ and $\omega(S_{\widetilde{\epsilon}})<1$. So, by Lemma \ref{lem:AY01}, $(S_{\widetilde{\epsilon}},P)$ is a $\Gamma$-contraction.\\

Note that for any $\epsilon >0$, $S_{\epsilon}^2-4P$ has a square root namely $S_{\epsilon}$ which commutes with both $S_{\epsilon}$ and $P$. We now show that the $\Gamma$-contraction $(S_{\widetilde{\epsilon}} , P)$ cannot be the symmetrization of a pair of commuting contractions. Clearly, $(S_{\widetilde{\epsilon}}, 0)=\pi (S_{\widetilde{\epsilon}}, 0)$. Since $\| S_{\widetilde{\epsilon}}\|>1$, it suffices to show that $(S_{\widetilde{\epsilon}}, 0)$ is the unique pair such that $(S_{\widetilde{\epsilon}}, 0)=\pi (S_{\widetilde{\epsilon}}, 0)$. Let $T_1,T_2 \in \mathcal M_2(\C)$ be such that $T_1+T_2=S_{\epsilon}$ and $T_1T_2=0$. So, without loss of generality let us assume that
\[
T_1= \begin{bmatrix}
a_1 & a_2 \\
a_3 & a_4
\end{bmatrix}
\quad \text{ and } \quad T_2= \begin{bmatrix}
\epsilon - a_1 & \epsilon - a_2 \\
-a_3 & -a_4
\end{bmatrix}
\]
and that $T_1T_2=T_2T_1=0$. Now
\[
T_1T_2=\begin{bmatrix}
a_1({\epsilon}-a_1)-a_2a_3 & a_1({\epsilon}-a_2)-a_2a_4 \\
a_3({\epsilon}-a_1)-a_3a_4  & a_3({\epsilon} - a_2) - a_4^2
\end{bmatrix}
\]
and
\[
T_2T_1
= \begin{bmatrix}
a_1({\epsilon}-a_1)+a_3({\epsilon}-a_2) & a_2({\epsilon}-a_1)+a_4({\epsilon}-a_2) \\
-a_1a_3-a_3a_4 & -a_2a_3 - a_4^2
\end{bmatrix}.
\]
Since $T_1T_2=T_2T_1=0$, considering the $(1,1)$ and $(1,2)$ blocks, we obtain
\begin{align}
& a_1({\epsilon} - a_1)-a_2a_3=a_1({\epsilon} - a_1)+ a_3({\epsilon} -a_2)=0 \label{eqn:t01}\\
& a_1({\epsilon}-a_2)-a_2a_4=a_2({\epsilon} - a_1) + a_4({\epsilon} - a_2)=0. \label{eqn:t02}
\end{align}
From the first equality of (\ref{eqn:t01}) we have that $a_3=0$ as ${\epsilon} >0$ and also, from the first equality of (\ref{eqn:t02}) we have $a_1=a_2+a_4$. Thus
\[
T_1= \begin{bmatrix}
a_1 & a_2 \\
0 & a_1-a_2
\end{bmatrix}\,,\,
T_2= \begin{bmatrix}
{\epsilon} - a_1 & {\epsilon} - a_2 \\
0 & a_2-a_1
\end{bmatrix}
\]
and hence
\[
T_1T_2= \begin{bmatrix}
a_1({\epsilon} -a_1) & a_1({\epsilon} - a_2) + a_2(a_2-a_1) \\
0 & -(a_1-a_2)^2
\end{bmatrix}
=
\begin{bmatrix}
0 & 0 \\
0 & 0
\end{bmatrix}.
\]
Considering the $(2,2)$ block we obtain $a_1=a_2$. Thus
\[
T_1T_2=
\begin{bmatrix}
a_1({\epsilon} - a_1) & a_1({\epsilon} -a_1) \\
0 & 0
\end{bmatrix}.
\]
Now $a_1({\epsilon}-a_1)=0$ implies that either ${\epsilon}=a_1$ or $a_1=0$. In either cases we have
\[
\{ T_1 , T_2 \}= \left\{ \begin{bmatrix}
{\epsilon} & {\epsilon} \\
0 & 0
\end{bmatrix}\,,\,
\begin{bmatrix}
0 & 0 \\
0 & 0
\end{bmatrix}
\right\} = \{ S_{\epsilon} , P \},
\]
and consequently our claim is established.
 
\end{eg}

\begin{rem}

Thus, we saw that a $\Gamma$-contraction $(S,P)$ can be symmetrization of a unique pair of commuting operators $T_1,T_2$ but $T_1,T_2$ may not be contractions as in the example above $\max \{ \|S_{\widetilde{\epsilon}}\|, \|P\| \}>1$. This triggers a natural question whether we can make an estimate for a bound on the norms of $T_1,T_2$ when $(S,P)=(T_1+T_2,T_1T_2)$. The following example shows that in general no bound less than $2$ works in such case.

\end{rem}

\begin{eg} \label{exm:02}

Let us consider the following family of commuting pairs of $2 \times 2$ scalar matrices:
\[
\mathcal F = \left\{ (S_r,0)\,:\; S_r =
\begin{bmatrix}
\frac{r^2}{2} & 2-r \\
0 & \frac{r^2}{2}
\end{bmatrix} \text{ with } 0<r< \dfrac{1}{100} \right \}.
\]
We show that each $(S_r,0)$ is a $\Gamma$-contraction which is the symmetrization of a unique pair of commuting operators $S_r ,0$ and that for every $\delta >0$ (with $2-\delta >0$), there is a member of the family $\mathcal F$, say $(S_{\widehat r}, 0)$ such that $2- \delta < \|S_{\widehat r} \| \leq 2$. Indeed, this will suffice to establish that no bound less than $2$ works in general.\\

We first show that $\omega(S_r)<1$ for each $(S_r,0)\in \mathcal F$. For any unit vector $(\alpha ,\beta)\in \C^2$, we have
\begin{align*}
\left\langle 
\begin{bmatrix}
\frac{r^2}{2} & {2-r} \\
0 & \frac{r^2}{2}
\end{bmatrix} 
\begin{bmatrix}
\alpha \\
\beta
\end{bmatrix} \,,\;
\begin{bmatrix}
\alpha \\
\beta
\end{bmatrix} 
\right\rangle
 & = \left\langle 
\begin{bmatrix}
(r^2/2)\alpha + (2-r)\beta \\
(r^2/2)\beta
\end{bmatrix} \,,\;
\begin{bmatrix}
\alpha \\
\beta
\end{bmatrix} 
\right\rangle
\\
&= \dfrac{r^2}{2}+(2-r)\beta \ov{\alpha}. \qquad [\text{ since } |\alpha|^2+|\beta|^2=1]
\end{align*}
Therefore,
\[
\left| \left\langle 
\begin{bmatrix}
\frac{r^2}{2} & {2-r} \\
0 & \frac{r^2}{2}
\end{bmatrix} 
\begin{bmatrix}
\alpha \\
\beta
\end{bmatrix} \,,\;
\begin{bmatrix}
\alpha \\
\beta
\end{bmatrix} 
\right\rangle
\right|
\leq \dfrac{r^2}{2}+(2-r)|\alpha \beta|.
\]
Since $|\alpha|^2+|\beta|^2=1=\sin^2 \theta + \cos^2 \theta$, it suffices to find an upper bound say $M$ of $|\alpha \beta|=|\sin \theta \cos \theta|$ to conclude that $\omega(S_r, 0)\leq \dfrac{r^2}{2}+(2-r)M$. Note that $|\sin \theta \cos \theta|=\left| \dfrac{1}{2} \sin 2\theta \right|\leq \dfrac{1}{2}$. Therefore, $M=1$ works and we have
\[
\omega(S_r)\leq \dfrac{r^2}{2}+ \dfrac{2-r}{2}=\dfrac{2+r^2-r}{2} <1 \,, \qquad \text{ as }\;\; 0<r< \dfrac{1}{100}.
\]
So, $\|S_r\|< 2$ and it follows from Lemma \ref{lem:AY01} that each $(S_r,0)\in \mathcal F$ is a $\Gamma$-contraction. Also, it is evident that $\|S_r\| \rightarrow 2$ as $r \rightarrow 0$. Therefore, for every $\delta >0$ with $2- \delta >0$, there is a member of the family $\mathcal F$, say $S_{\widehat r}$ such that
\[
2- \delta < \|S_{\widehat r} \| \leq 2.
\]
It remains to show that $(S_r,0)$ arises as the symmetrization of a unique pair of commuting operators, which are $S_r$ and $0$. It is obvious that $(S_r,0)$ is the symmetrization of $S_r$ and $ 0 $, we need to prove the uniqueness part only. For the sake of calculation we denote $a_1=\dfrac{r^2}{2}$ and $a_2=2-r$ so that $S_r=\begin{bmatrix}
a_1 & a_2 \\
0 & a_1
\end{bmatrix}.
$
Suppose $(S_r,0)=(T_1+T_2,T_1T_2)$ for a pair of commuting $2\times 2$ matrices $T_1, T_2$ and suppose
\[
T_1=
\begin{bmatrix}
b_1 & b_2 \\
b_3 & b_4
\end{bmatrix}
\,,\; \quad T_2
=
\begin{bmatrix}
a_1-b_1 & a_2 - b_2 \\
-b_3 & a_1-b_4
\end{bmatrix}.
\]
Then
\[
T_1T_2=
\begin{bmatrix}
b_1(a_1-b_1)-b_2b_3 & b_1(a_2-b_2)+b_2(a_1-b_4) \\
b_3(a_1-b_1)-b_3b_4 & b_3(a_2-b_2)+ b_4(a_1-b_4) 
\end{bmatrix}
\]
and
\[
T_2T_1=
\begin{bmatrix}
b_1(a_1-b_1)+b_3(a_2-b_2) & b_2(a_1-b_1)+b_4(a_2-b_2) \\
b_3(a_1-b_4)-b_1b_3 & b_4(a_1-b_4) - b_2b_3 
\end{bmatrix}.
\]
Since $T_1T_2=T_2T_1=0$, we have the following four identities:
\begin{align}
& b_1(a_1-b_1)-b_2b_3  = b_1(a_1-b_1)+b_3(a_2-b_2) =0 \; ; \label{eqn:u01} \\
& b_1(a_2-b_2)+b_2(a_1-b_4) = b_2(a_1-b_1)+b_4(a_2-b_2) =0 \; ; \label{eqn:u02}  \\
& b_3(a_1-b_1)-b_3b_4 = b_3(a_1-b_4)-b_1b_3 =0 \; ; \label{eqn:u03} \\
& b_3(a_2-b_2)+ b_4(a_1-b_4) = b_4(a_1-b_4) - b_2b_3 =0 \label{eqn:u04}.
\end{align}
From the first equality in (\ref{eqn:u01}), we have $b_3a_2=0$ and since $a_2=2-r >0$, it follows that $b_3=0$. Again, from the first equality in (\ref{eqn:u02}), we have $a_2(b_1-b_4)=0$ which implies that $b_1=b_4$ as $a_2>0$. Thus,
\[
T_1=
\begin{bmatrix}
b_1 & b_2 \\
0 & b_1
\end{bmatrix}
\,,\; \quad T_2
=
\begin{bmatrix}
a_1-b_1 & a_2 - b_2 \\
0 & a_1-b_1
\end{bmatrix}\,,
\]
and consequently the above equations (\ref{eqn:u01}) and (\ref{eqn:u02}) reduce to the following two equations respectively:
\begin{align}
&  b_1(a_1-b_1) =0 \; ; \label{eqn:v01} \\
&  b_2(a_1-b_1)+b_1(a_2-b_2) =0 \;  \label{eqn:v02}.
\end{align}
Evidently (\ref{eqn:v01}) implies that either $b_1=0$ or $a_1=b_1$. If $b_1=0$, it follows from (\ref{eqn:v02}) that $b_2a_1=0$. Therefore, $b_2=0$ as $a_1=\dfrac{r^2}{2}>0$. Thus, $b_1=b_2=b_3=b_4=0$ and consequently $T_1=0$ and $T_2=S_r$. On the other hand if $a_1=b_1$, then from (\ref{eqn:v02}) we have $a_1(a_2-b_2)=0$. So, we have $a_2=b_2$ as $a_1>0$. It clearly shows that $T_2=0$ (as $a_1=b_1$ and $a_2=b_2$) and thus we have $T_1=S_r$. So, considering either cases we have that $\{ T_1,T_2 \}= \{ S_r, 0 \}$ and thus $\{ S_r, 0 \}$ is unique.

\end{eg}


Thus, until now we have observed the following two important points:
\begin{itemize}
\item[(a)] Example \ref{exm:02} guarantees that in general no real number less than $2$ can be a bound for $\max \{ \|T_1\|,\|T_2\|\}$ when $(S,P)=(T_1+T_2,T_1T_2)$ for a unique pair of commuting operators $T_1,T_2$.

\item[(b)] Example 1.7 in \cite{ay-jot} shows that there are $\Gamma$-contractions which are not even symmetrization of commuting operators.
\end{itemize}
Taking cue from these two points, we propose the following theorem which is the main result of this article.

\begin{thm}
For any $\Gamma$-contraction $(S,P)$, acting on a Hilbert space $\mathcal H$, there exists a Hilbert space $\mathcal K$ containing $\mathcal H$ as a closed linear subspace and a pair of commuting operators $T_1,T_2$ on $\mathcal K$ such that
\begin{itemize}
\item[(i)] $(T_1+T_2,T_1T_2)$ is a $\Gamma$-contraction ;
\item[(ii)] $\mathcal H$ is a joint reducing subspace for $T_1+T_2$ , $T_1T_2$ ;
\item[(iii)] $(T_1+T_2)|_{\mathcal H}=S$ , $(T_1T_2)|_{\mathcal H}=P$ ;
\item[(iv)] $\max \{ \|T_1\|,\,\|T_2\| \} \leq 2$.
\end{itemize}
\end{thm}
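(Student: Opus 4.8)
The plan is to realize $(S,P)$ on the doubled space $\mathcal K=\mathcal H\oplus\mathcal H$, identifying $\mathcal H$ with $\mathcal H\oplus\{0\}$, and to take $\widetilde S=S\oplus S$ and $\widetilde P=P\oplus P$. With this choice conditions (ii) and (iii) are automatic, since $\mathcal H\oplus\{0\}$ is a block-diagonal reducing subspace for $\widetilde S,\widetilde P$ with the correct restrictions, and $(\widetilde S,\widetilde P)=(S,P)\oplus(S,P)$ is a $\Gamma$-contraction because both the spectral condition and the von Neumann inequality pass to direct sums. It then remains to exhibit a single operator $\Delta$ on $\mathcal K$ that commutes with $\widetilde S$ and $\widetilde P$, satisfies $\Delta^2=\widetilde S^2-4\widetilde P$, and obeys $\|\Delta\|\le 2$. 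Setting $T_1=\frac12(\widetilde S+\Delta)$ and $T_2=\frac12(\widetilde S-\Delta)$ would then give commuting operators with $T_1+T_2=\widetilde S$, $T_1T_2=\frac14(\widetilde S^2-\Delta^2)=\widetilde P$, and $\|T_i\|\le\frac12(\|\widetilde S\|+\|\Delta\|)\le 2$, which is exactly (i)--(iv).

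To build $\Delta$, write $N=S^2-4P$, so that $\widetilde S^2-4\widetilde P=N\oplus N$. The whole point of doubling is that $N\oplus N$ always admits a square root of anti-diagonal form,
\[
\Delta=\begin{bmatrix} 0 & cN \\ c^{-1}I & 0\end{bmatrix},
\]
for any scalar $c>0$, since $\Delta^2=N\oplus N$ independently of $c$. A direct block computation shows that the conditions $\Delta\widetilde S=\widetilde S\Delta$ and $\Delta\widetilde P=\widetilde P\Delta$ reduce to the identities $NS=SN$ and $NP=PN$, both of which hold because $N=S^2-4P$ is a polynomial in the commuting pair $S,P$. Thus $\Delta$ commutes with $\widetilde S$ and $\widetilde P$ for every choice of $c$, and we are free to pick $c$ to minimise $\|\Delta\|=\max\{c\|N\|,\,c^{-1}\}$.

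The only genuine issue is the norm bound $\|\Delta\|\le 2$, and this is where the hypothesis that $(S,P)$ is a $\Gamma$-contraction enters decisively. Consider the polynomial $g(z_1,z_2)=z_1^2-4z_2$. Since $s^2-4p=(z_1+z_2)^2-4z_1z_2=(z_1-z_2)^2$ for $(s,p)=\pi(z_1,z_2)$, we get $\|g\|_{\infty,\,\Gamma}=\sup_{|z_i|\le 1}|z_1-z_2|^2=4$, so the defining von Neumann inequality gives $\|N\|=\|g(S,P)\|\le 4$. Choosing $c=\|N\|^{-1/2}$ (when $N\neq 0$) balances the two terms and yields $\|\Delta\|=\|N\|^{1/2}\le 2$, whence $\max\{\|T_1\|,\|T_2\|\}\le 2$; when $N=0$ one simply takes $\Delta=0$. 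I expect extracting the sharp constant to be the main subtlety: without the rescaling one only reaches $\|\Delta\|\le 4$ and the bound $3$, while without the estimate $\|S^2-4P\|\le 4$ (in place of the crude $\|S\|^2+4\|P\|\le 8$) even the optimal $c$ gives only $2\sqrt 2$. The two refinements together are precisely what is needed to land on the optimal bound $2$, in agreement with the family $\mathcal F$ of Example \ref{exm:02}, which shows that no smaller constant is possible.
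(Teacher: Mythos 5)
Your proof is correct, and the core construction coincides with the paper's: the operators $\widetilde A,\widetilde B$ of Theorem \ref{thm:N02} are exactly your $T_1,T_2$ on $\mathcal H\oplus\mathcal H$ with the anti-diagonal square root $\Delta$ taken at the fixed scale $c=1/2$ (blocks $N/2$ and $2I$). Where you genuinely diverge is in how the norm bound $\|T_i\|\le 2$ is obtained, and your route is the more elementary one. The paper packages the construction through the ``symmetrized-half-bidisc'' $\widehat\Gamma$ (Lemmas \ref{lem:N01}--\ref{thm:N01}) and then bounds $\|\widetilde A\|$ by invoking the \emph{complete} spectral set property of $\Gamma$ --- i.e.\ the Agler--Young rational dilation theorem, part (2) of Theorem \ref{char} --- applied to the matricial polynomial $F(s,p)=\bigl[\begin{smallmatrix} s/2 & (s^2-4p)/4\\ 1 & s/2\end{smallmatrix}\bigr]$, together with the crude estimate $\|Q\|\le n\max_{i,j}|q_{ij}|$. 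You instead use only the scalar von Neumann inequality for the single polynomial $z_1^2-4z_2$, whose sup-norm on $\Gamma$ is $\sup_{|z_i|\le 1}|z_1-z_2|^2=4$, to get $\|S^2-4P\|\le 4$, plus the elementary identity $\bigl\|\bigl[\begin{smallmatrix}0&X\\ Y&0\end{smallmatrix}\bigr]\bigr\|=\max\{\|X\|,\|Y\|\}$ and the free parameter $c$ to balance the two off-diagonal blocks. This removes the dependence on the deep spectral-implies-complete-spectral theorem and on the half-bidisc scaling lemmas entirely. One cosmetic remark: the optimization over $c$ is not actually needed once you know $\|N\|\le 4$ and $\|S\|\le 2$, since the paper's fixed choice $c=1/2$ already gives $\|\Delta\|=\max\{\|N\|/2,\,2\}\le 2$; but your choice $c=\|N\|^{-1/2}$ (with the degenerate case $N=0$ handled separately, as you do) works equally well, and all the remaining verifications --- commutativity of $\Delta$ with $S\oplus S$ and $P\oplus P$, the identity $T_1T_2=\frac14(\widetilde S^2-\Delta^2)=\widetilde P$, and the fact that $\mathcal H\oplus\{0\}$ reduces the block-diagonal pair $(T_1+T_2,T_1T_2)$ even though it does not reduce $T_1,T_2$ individually --- are sound.
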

Note that the above theorem was stated as Theorem \ref{thm:N03} and the next section is devoted to frame a proof of it.

\vspace{0.5cm}

\section{Proof of Theorem \ref{thm:N03}}\label{sec:04}

\vspace{0.5cm}

\noindent First we consider a proper subset of the symmetrized bidisc which will play an important role here. The \textit{symmetrized-half-bidisc} $\widehat{\mathbb G}$ is defined to be the symmetrization of the half bidisc $\dfrac{\mathbb D^2}{2}$ i.e.,
\[
\widehat{\mathbb G}=\left\{(w_1+w_2,w_1w_2)\,:\, |w_1|<\dfrac{1}{2}\,,\, |w_2|<\dfrac{1}{2}  \right\}.
\]
It is obvious that the \textit{closed symmetrized-half-bidisc} $\widehat{\Gamma}=\ov{\widehat{\mathbb G}}$ is the set
\[
\widehat{\Gamma}=\left\{(w_1+w_2,w_1w_2)\,:\, |w_1|\leq \dfrac{1}{2}\,,\, |w_2|\leq \dfrac{1}{2}  \right\}.
\]
\begin{defn}
A pair of commuting Hilbert space operators $(\widehat S, \widehat P)$ for which $\widehat \Gamma$ is a spectral set is called a $\widehat \Gamma$-\textit{contraction}.
\end{defn}
Needless to mention that every $\widehat \Gamma$-contraction is a $\Gamma$-contraction. Also it follows from the definition that for a $\widehat \Gamma$-contraction $(\widehat S, \widehat P)$, $\|\widehat S\|\leq 1$ and $\|\widehat P \|\leq \dfrac{1}{4}$. Note that a $\Gamma$-contraction $(S,P)$ for which $\|S\|\leq 1$ and $\|P\|\leq \dfrac{1}{4}$ may not be a $\widehat \Gamma$-contraction. For example, consider the point $(4/5,0)=\pi (4/5,0)$ belongs to $\mathbb G \subset \Gamma$ and thus a $\Gamma$-contraction and $\|S\|=|4/5|<1$ and $\|P\|=0<\dfrac{1}{4}$ here but $(4/5,0)$ does not belong to $\widehat \Gamma$. Below we have some features of the set $\widehat \Gamma$ and its interplay with $\Gamma$.

\begin{lem}\label{lem:N01}
$\widehat \Gamma$ is polynomially convex.
\end{lem}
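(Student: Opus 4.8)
The plan is to establish that $\widehat{\Gamma}$ is polynomially convex by transferring the known polynomial convexity of $\Gamma$ via the natural scaling. The key observation is that $\widehat{\Gamma}$ is the image of $\Gamma$ under a simple dilation: if $(z_1,z_2)$ ranges over $\overline{\mathbb D^2}$ and we set $w_i = z_i/2$, then $(w_1+w_2, w_1w_2) = (\tfrac12 (z_1+z_2), \tfrac14 z_1 z_2)$. Thus the linear map $L:\mathbb C^2 \to \mathbb C^2$ defined by $L(s,p) = (s/2, p/4)$ carries $\Gamma$ bijectively onto $\widehat{\Gamma}$, and $L$ is an invertible linear (hence polynomial, and biholomorphic) map of $\mathbb C^2$.

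First I would record that $\widehat{\Gamma} = L(\Gamma)$ with $L$ as above, and note that polynomial convexity is preserved under invertible affine maps of $\mathbb C^n$. Concretely, recall that a compact set $K$ is polynomially convex if its polynomial hull $\widehat{K} = \{ z : |f(z)| \le \sup_K |f| \ \text{for all polynomials } f\}$ equals $K$. For an invertible affine $L$, a polynomial $f$ pulls back to a polynomial $f \circ L$, and $\sup_{L(\Gamma)} |f| = \sup_{\Gamma} |f\circ L|$; conversely every polynomial arises this way since $L^{-1}$ is also affine. Hence the polynomial hull of $L(\Gamma)$ is exactly $L$ applied to the polynomial hull of $\Gamma$. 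Since $\Gamma$ is polynomially convex by Lemma 2.1 of \cite{ay-jfa}, its hull is itself, and therefore the hull of $\widehat{\Gamma} = L(\Gamma)$ is $L(\Gamma) = \widehat{\Gamma}$, giving polynomial convexity.

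The main step to verify carefully is the transfer lemma that polynomial convexity is invariant under invertible affine self-maps of $\mathbb C^n$; this is standard but deserves an explicit line, since the equality of the two polynomial hulls rests on the fact that composition with $L$ and with $L^{-1}$ maps the class of polynomials bijectively onto itself and multiplicatively preserves supremum norms over the relevant sets. I anticipate no real obstacle here: the scaling $L$ is a genuine polynomial automorphism of $\mathbb C^2$, so the correspondence is exact and no approximation is needed.

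An alternative, more self-contained route avoids quoting invariance and instead argues directly. Given a point $(s,p)\notin \widehat{\Gamma}$, its preimage $(2s, 4p) = L^{-1}(s,p)$ lies outside $\Gamma$; by polynomial convexity of $\Gamma$ there is a polynomial $g$ with $|g(2s,4p)| > \sup_{\Gamma}|g|$. Then the polynomial $f(s,p) := g(2s,4p)$ satisfies $|f(s,p)| > \sup_{\widehat{\Gamma}} |f|$, exhibiting $(s,p)$ as outside the polynomial hull of $\widehat{\Gamma}$. This shows $\widehat{\Gamma}$ is polynomially convex without invoking a general invariance principle, and I would likely present this direct version as the cleanest proof.
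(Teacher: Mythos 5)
Your argument is correct: $\widehat{\Gamma}=L(\Gamma)$ for the invertible linear map $L(s,p)=(s/2,p/4)$, and the direct version of your transfer step (given $(s,p)\notin\widehat{\Gamma}$, separate $(2s,4p)$ from $\Gamma$ by a polynomial $g$ and pull back to $f(s,p)=g(2s,4p)$) is airtight; no approximation issues arise since $L$ and $L^{-1}$ carry polynomials to polynomials and match the relevant sup norms exactly. However, this is a genuinely different route from the one the paper indicates. The paper gives no argument at all for this lemma: it simply asserts that the proof is ``similar to that of Lemma 2.1 in \cite{ay-jfa}'', i.e.\ it invites the reader to rerun Agler and Young's intrinsic proof of the polynomial convexity of $\Gamma$ (which works through the symmetrization map and symmetric polynomials) with the bidisc replaced by the half-bidisc. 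Your proof instead \emph{deduces} the statement from the already-established polynomial convexity of $\Gamma$ via the scaling $L$ --- the very same change of variables $\sigma(\widehat{s},\widehat{p})=(2\widehat{s},4\widehat{p})$ that the paper exploits in the subsequent Lemmas on $\widehat{\Gamma}$ versus $\Gamma$. What your approach buys is economy and an actual self-contained proof where the paper has only a pointer; what the paper's suggested route buys is independence from the quoted result (one reproves convexity from scratch rather than relying on it). Either is acceptable; yours is arguably the cleaner fit with the rest of Section 4.
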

Proof to this lemma is similar to that of Lemma 2.1 in \cite{ay-jfa}, where it was proved that the set $\Gamma$ is polynomially convex.

\begin{lem}\label{lem:N01A}
If a compact subset $K$ of $\mathbb C^n$ is polynomially convex then $K$ is a spectral set for a tuple of commuting Hilbert space operators $(T_1,\dots, T_n)$ if and only if the von-Neumann inequality holds for all polynomials in $\mathbb C[z_1,\dots,z_n]$, i.e.,
\[
\| f(T_1,\dots, T_n)\|\leq \|f\|_{\infty, K}\,, \quad \text{ for all } f\in \mathbb C[z_1,\dots,z_n].
\]
\end{lem}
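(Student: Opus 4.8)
The plan is to prove the two implications separately, the forward one being essentially free. If $K$ is a spectral set for the commuting tuple $T=(T_1,\dots,T_n)$, then by definition the von Neumann inequality holds for every rational function $f=p/q$ whose denominator does not vanish on $K$; specializing to $q\equiv 1$ shows that it holds for every polynomial, which is exactly the stated conclusion. Polynomial convexity plays no role here. The whole content is therefore in the converse, so I would assume from now on that $\|f(T)\|\leq \|f\|_{\infty,K}$ for all $f\in\mathbb C[z_1,\dots,z_n]$ and that $K$ is polynomially convex, and I must verify two things: that $\sigma_T(T)\subseteq K$, and that the polynomial inequality upgrades to all admissible rational functions.

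For the spectral containment I would invoke Taylor's spectral mapping theorem, which gives $\sigma(p(T))=p(\sigma_T(T))$ for every polynomial $p$. Fixing $\lambda\in\sigma_T(T)$, the scalar $p(\lambda)$ then lies in $\sigma(p(T))$, so that
\[
|p(\lambda)|\leq r(p(T))\leq \|p(T)\|\leq \|p\|_{\infty,K}.
\]
Since this holds for every polynomial $p$, the point $\lambda$ belongs to the polynomially convex hull of $K$, which by hypothesis equals $K$. Hence $\sigma_T(T)\subseteq K$. In particular, whenever $q$ has no zero on $K$ the operator $q(T)$ is invertible (again by spectral mapping, as $0\notin q(\sigma_T(T))$), so the rational functional calculus $f(T):=p(T)q(T)^{-1}$ is well defined.

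The remaining step is the passage from polynomials to rational functions, and this is where polynomial convexity enters analytically through the Oka--Weil theorem. Given $f=p/q$ with $q$ non-vanishing on $K$, the function $f$ is holomorphic on the open neighbourhood $\{q\neq 0\}$ of $K$, so Oka--Weil furnishes polynomials $p_k\to f$ uniformly on $K$. The polynomial von Neumann inequality makes $(p_k(T))$ norm-Cauchy, since $\|p_k(T)-p_m(T)\|\leq \|p_k-p_m\|_{\infty,K}$; let $A$ be its limit, and note that passing to the limit in $\|p_k(T)\|\leq \|p_k\|_{\infty,K}$ gives $\|A\|\leq \|f\|_{\infty,K}$. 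To identify $A$ with $f(T)$ I would clear denominators: the polynomials $q\,p_k$ converge uniformly on $K$ to $q f=p$ (using that $q$ is bounded on the compact set $K$), so the polynomial inequality again yields $(q\,p_k)(T)\to p(T)$; but $(q\,p_k)(T)=q(T)p_k(T)\to q(T)A$, whence $q(T)A=p(T)=q(T)f(T)$, and invertibility of $q(T)$ forces $A=f(T)$. This gives $\|f(T)\|\leq \|f\|_{\infty,K}$ and completes the converse.

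The step I expect to be the main obstacle is precisely this identification of the limit $A$ with $f(T)$: uniform convergence of the $p_k$ on $K$ does not by itself produce norm convergence of $p_k(T)$ to $f(T)$ via any generic continuity of the holomorphic functional calculus, which would require control on a neighbourhood of $\sigma_T(T)$ rather than merely on $K$. The device of multiplying through by $q(T)$ and using the invertibility coming from spectral mapping is what sidesteps this difficulty and keeps the entire argument inside the two tools actually available, namely the polynomial von Neumann inequality and Taylor's spectral mapping theorem.
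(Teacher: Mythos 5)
Your proof is correct. Note that the paper does not actually prove this lemma --- it explicitly refrains, calling it a routine exercise that follows from polynomial convexity --- and your argument is exactly the standard one it is deferring to: the polynomial von Neumann inequality together with the Taylor spectral mapping theorem places $\sigma_T(T)$ inside the polynomially convex hull of $K$, which equals $K$, and then Oka--Weil plus the denominator-clearing identity $q(T)A=p(T)$ (with $q(T)$ invertible) upgrades the inequality from polynomials to rational functions with poles off $K$. You have also correctly isolated and resolved the one genuinely delicate step, namely identifying the norm limit of $p_k(T)$ with $f(T)$ without appealing to continuity of the holomorphic functional calculus on a neighbourhood of the spectrum.
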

We refrain from proving this lemma here. Indeed, a proof to this lemma is a routine exercise and follows from the fact that $K$ is polynomially convex.

\begin{lem}\label{lem:N02}
A point $(\widehat s, \widehat p)\in \widehat \Gamma \;(\text{or} \in \widehat{\mathbb G})$ if and only if $(2\widehat s, 4\widehat p)\in \Gamma \;(\text{or} \in \mathbb G)$.
\end{lem}
\begin{proof}
We prove for $\widehat \Gamma$ and $\Gamma$. The proof for $\widehat{\mathbb G}$ and $\mathbb G$ is similar.
Let $(\widehat s, \widehat p)\in \widehat \Gamma$. Then $\widehat s=w_1+w_2$ and $p=w_1w_2$ for a pair of points $(w_1,w_2)\in \dfrac{1}{2}\overline{\mathbb D^2}$. Clearly $2 \widehat s =(2w_1)+(2w_2)$ and $4\widehat p=(2w_1)(2w_2)$, where $2w_1,2w_2\in \overline{\mathbb D}$. therefore, $(2\widehat s, 4\widehat p)\in\Gamma$.

Conversely, suppose $(2\widehat s,4\widehat p)\in\Gamma$. Then $(2\widehat s,4\widehat p)=\pi (z_1,z_2)$ for some $z_1,z_2\in\overline{\mathbb D}$. Clearly $ \dfrac{z_1}{2}, \dfrac{z_2}{2} \in\dfrac{1}{2}\overline{\mathbb D}$ and their symmetrization gives $(\widehat s, \widehat p)$. Hence $(\widehat s, \widehat p)\in \widehat \Gamma$.
\end{proof}

An operator theoretic analogue of this result follows straightway.

\begin{lem}\label{thm:N01}
A pair of commuting Hilbert space operators $(\widehat S,\widehat P)$ is a $\widehat \Gamma$-contraction if and only if $(2\widehat S, 4\widehat P)$ is a $\Gamma$-contraction.
\end{lem}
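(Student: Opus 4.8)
The plan is to realize this as the operator-theoretic shadow of the scalar statement in Lemma \ref{lem:N02}, transported along the linear change of variables $\Phi:\mathbb C^2\RW\mathbb C^2$ defined by $\Phi(z_1,z_2)=(2z_1,4z_2)$. This $\Phi$ is a polynomial (indeed linear) automorphism of $\mathbb C^2$ with polynomial inverse $\Phi^{-1}(w_1,w_2)=(w_1/2,w_2/4)$, and Lemma \ref{lem:N02} says precisely that $\Phi(\widehat\Gamma)=\Gamma$ and $\Phi^{-1}(\Gamma)=\widehat\Gamma$. Writing $S=2\widehat S$ and $P=4\widehat P$, the heart of the argument is that for any polynomial $g\in\C[z_1,z_2]$ the composite $g\circ\Phi^{-1}$ is again a polynomial and the functional calculus is compatible with composition, i.e. $(g\circ\Phi^{-1})(S,P)=g(\widehat S,\widehat P)$; symmetrically $(g\circ\Phi)(\widehat S,\widehat P)=g(S,P)$.

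First I would reduce the von~Neumann inequality to polynomials. Both $\Gamma$ and $\widehat\Gamma$ are polynomially convex (the latter by Lemma \ref{lem:N01}, the former from \cite{ay-jfa}), so by Lemma \ref{lem:N01A} it suffices to test the inequality on members of $\C[z_1,z_2]$. Suppose $(S,P)=(2\widehat S,4\widehat P)$ is a $\Gamma$-contraction and let $g\in\C[z_1,z_2]$. Setting $h=g\circ\Phi^{-1}\in\C[z_1,z_2]$, I would compute
\[
\|g(\widehat S,\widehat P)\|=\|h(S,P)\|\leq\|h\|_{\infty,\Gamma}=\sup_{(w_1,w_2)\in\Gamma}|g(\Phi^{-1}(w_1,w_2))|=\sup_{(s,p)\in\widehat\Gamma}|g(s,p)|=\|g\|_{\infty,\widehat\Gamma},
\]
where the penultimate equality is exactly $\Phi^{-1}(\Gamma)=\widehat\Gamma$ from Lemma \ref{lem:N02}. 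The reverse implication is identical with the roles of $\Phi$ and $\Phi^{-1}$ (equivalently of $\Gamma$ and $\widehat\Gamma$) interchanged: given a $\widehat\Gamma$-contraction $(\widehat S,\widehat P)$ and $g\in\C[z_1,z_2]$, apply the hypothesis to $g\circ\Phi$ and use $\Phi(\widehat\Gamma)=\Gamma$.

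It remains to match the Taylor-spectrum conditions. By the Taylor spectral mapping theorem applied to the polynomial map $\Phi$ one has $\sigma_T(2\widehat S,4\widehat P)=\sigma_T(\Phi(\widehat S,\widehat P))=\Phi(\sigma_T(\widehat S,\widehat P))$, and since $\Phi$ is a bijection with $\Phi(\widehat\Gamma)=\Gamma$, the inclusion $\sigma_T(\widehat S,\widehat P)\subseteq\widehat\Gamma$ holds if and only if $\sigma_T(2\widehat S,4\widehat P)\subseteq\Gamma$. Combining this with the norm inequalities of the previous paragraph gives the claimed equivalence. The only nonelementary ingredient is the spectral mapping theorem for the Taylor joint spectrum, and that is where I expect the main (purely technical) care to be needed; everything else is bookkeeping about composing polynomials with the diagonal scaling $\Phi$ and invoking the polynomial convexity already recorded in Lemmas \ref{lem:N01} and \ref{lem:N01A}.
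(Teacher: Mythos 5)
Your proposal is correct and follows essentially the same route as the paper: reduce to the polynomial von Neumann inequality via polynomial convexity (Lemmas \ref{lem:N01} and \ref{lem:N01A}), then transport polynomials along the scaling map using $\Phi(\widehat\Gamma)=\Gamma$ from Lemma \ref{lem:N02}. Your explicit treatment of the Taylor-spectrum condition via the spectral mapping theorem is a small addition the paper leaves implicit, but it does not change the argument.
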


\begin{proof}
Since both $\Gamma$ and $\widehat{\Gamma}$ are polynomially convex, it suffices to establish the von-Neumann inequality only.
Let $(\widehat S, \widehat P)$ be a $\widehat \Gamma$-contraction and let $\sigma:\widehat \Gamma \rightarrow \Gamma $ be defined by $\sigma(\widehat s, \widehat p)=(2\widehat s, 4\widehat p)$. Then $\sigma$ is a biholomorphic map. Let $f\in \mathbb C[z_1,z_2]$ be arbitrary. Then
\begin{align*}
\|f(2\widehat S, 4\widehat P) \| =\| f \circ \sigma (\widehat S, \widehat P) \|
& \leq \sup_{(w_1,w_2)\in \widehat \Gamma} |f\circ \sigma (w_1,w_2)| \\
& = \sup_{(w_1,w_2)\in \widehat \Gamma}|f(2w_1,4w_2)| \\
& = \sup_{(z_1,z_2)\in\Gamma}|f(z_1,z_2)| \; [\text{since } \sigma \text{ is biholomorphic}].
\end{align*}
Hence $(2\widehat S, 4 \widehat P)$ is a $\Gamma$-contraction.
Conversely, if $(2\widehat S, 4\widehat P)$ is a $\Gamma$-contraction, then for any $g\in\mathbb C[z_1,z_2]$,
\[
\| g(\widehat S, \widehat P) \|  =\| g \circ \sigma^{-1} (2\widehat S, 4\widehat P) \|
 \leq \| g\circ \sigma^{-1} \|_{\infty, \Gamma} =\|g\|_{\infty, \widehat \Gamma}.
\]
Therefore, $(\widehat S, \widehat P)$ is a $\widehat \Gamma$-contraction and the proof is complete.
\end{proof}

\noindent The next theorem is the major step to finish the proof.

\begin{thm}\label{thm:N02}
Let $(\widehat S,\widehat P)$ acting on $\mathcal H$ be a $\widehat \Gamma$-contraction. Then there exist a Hilbert space $\mathcal K$ that contains $\mathcal H$ as a closed linear subspace, a pair of commuting contractions $A,B$ on $\mathcal K$ such that $\mathcal H$ is a joint reducing subspace for $A+B, AB$ and that $(A+B)|_{\mathcal H}=\widehat S$, $(AB)|_{\mathcal H}=\widehat P$.
\end{thm}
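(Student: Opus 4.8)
The plan is to realize $(\widehat S,\widehat P)$ as a direct summand of a symmetrization of two commuting contractions on $\mathcal K=\mathcal H\oplus\mathcal H$, via the block operators
\[
A=\frac12\begin{bmatrix} \widehat S & X \\ I & \widehat S \end{bmatrix},\qquad B=\frac12\begin{bmatrix} \widehat S & -X \\ -I & \widehat S \end{bmatrix},
\]
where $X=\widehat S^2-4\widehat P$. The motivation is the scalar picture: a point of $\widehat\Gamma$ symmetrizes $w_{1,2}=\tfrac12\big(\widehat s\pm\sqrt{\widehat s^2-4\widehat p}\big)$ with $|w_i|\le\tfrac12$, so the only obstruction to symmetrizing $(\widehat S,\widehat P)$ on $\mathcal H$ itself is the possible absence of a commuting square root of the ``discriminant'' $X$. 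Passing to $\mathcal H\oplus\mathcal H$ lets me sidestep this: I factor $X=X\cdot I$ rather than as a genuine square, the off-diagonal entries $X$ and $I$ playing the role of $\pm\Delta$ from Theorem \ref{lem:char}.

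First I would record the two norm estimates that drive the construction. Since $\widehat\Gamma$ is a spectral set for $(\widehat S,\widehat P)$ and is polynomially convex (Lemma \ref{lem:N01}), the von Neumann inequality of Lemma \ref{lem:N01A} applies to every polynomial. Applying it to $z_1$ gives $\|\widehat S\|\le\sup_{\widehat\Gamma}|z_1|=1$, and applying it to $z_1^2-4z_2$ gives, using $z_1^2-4z_2=(w_1-w_2)^2$ on $\widehat\Gamma$,
\[
\|X\|=\|\widehat S^2-4\widehat P\|\le \sup_{\widehat\Gamma}|z_1^2-4z_2|=\sup_{|w_1|,\,|w_2|\le 1/2}|w_1-w_2|^2=1 .
\]
This bound is precisely what the half-symmetrized bidisc buys us: on $\Gamma$ the same polynomial has supremum $4$, which would be far too large to yield contractions.

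Next I would verify the three required properties by a direct $2\times2$ block computation. Since $X$ is a polynomial in the commuting pair $\widehat S,\widehat P$, we have $[X,\widehat S]=0$; this makes $AB=BA$ and forces the off-diagonal blocks of $AB$ to vanish. A short computation then gives $A+B=\widehat S\oplus\widehat S$ and, using $\tfrac14(\widehat S^2-X)=\widehat P$, also $AB=\widehat P\oplus\widehat P$. Hence $\mathcal H\cong\mathcal H\oplus 0$ is a joint reducing subspace for $A+B$ and $AB$ with restrictions $\widehat S$ and $\widehat P$. For the norm bound, I split the matrix $M$ defining $2A$ into its diagonal and anti-diagonal parts and use $\big\|\begin{bmatrix}0&X\\ I&0\end{bmatrix}\big\|=\max(\|X\|,1)=1$, obtaining $\|M\|\le\|\widehat S\|+1\le 2$, so $\|A\|\le1$; the estimate for $B$ is identical.

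The only genuinely delicate point is the norm control, and it is entirely localized in the discriminant estimate $\|\widehat S^2-4\widehat P\|\le 1$; everything else is routine block algebra together with the observation that $X$ commutes with $\widehat S$. I therefore expect the main work to be conceptual rather than computational, namely recognizing that the correct polynomial to feed into the von Neumann inequality is $z_1^2-4z_2$ and that the half-scaling brings its supremum down to $1$, thereby letting the asymmetric factorization $X=X\cdot I$ produce genuine commuting contractions without ever extracting a square root.
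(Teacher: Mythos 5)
Your proof is correct, and the construction is literally the paper's: after unwinding the scaling $(S,P)=(2\widehat S,4\widehat P)$, the paper's $\widetilde A=\begin{bmatrix} S/2 & (S^2-4P)/4\\ I & S/2\end{bmatrix}$ is exactly your $2A=\begin{bmatrix}\widehat S & X\\ I & \widehat S\end{bmatrix}$ with $X=\widehat S^2-4\widehat P$, and the block algebra giving $A+B=\widehat S\oplus\widehat S$, $AB=BA=\widehat P\oplus\widehat P$ is the same. The only real divergence is the norm bound $\|A\|\le 1$. The paper invokes the fact that $\Gamma$ is a \emph{complete} spectral set for $(S,P)$ (i.e.\ Agler--Young's rational dilation theorem, Theorem \ref{char}), applies the matricial polynomial $F(s,p)=\begin{bmatrix} s/2 & (s^2-4p)/4\\ 1 & s/2\end{bmatrix}$, and then uses the entrywise estimate $\|Q\|\le n\max_{i,j}|q_{ij}|$. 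You instead apply the scalar von Neumann inequality to $z_1$ and to $z_1^2-4z_2$ to get $\|\widehat S\|\le 1$ and $\|X\|\le 1$, and then split $2A$ into its diagonal and anti-diagonal parts, using $\bigl\|\begin{bmatrix}0 & X\\ I & 0\end{bmatrix}\bigr\|=\max(\|X\|,1)$ and the triangle inequality. Both give $\|2A\|\le 2$; yours is the more elementary route, since it needs only the spectral-set (scalar von Neumann) hypothesis and not the complete spectral set property, and so does not lean on the rational dilation theorem at this point. (Your appeal to Lemmas \ref{lem:N01} and \ref{lem:N01A} is not even necessary for this direction, as the definition of a spectral set already supplies the scalar von Neumann inequality for polynomials.)
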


\begin{proof}
Let $(S,P)=(2\widehat S, 4\widehat P)$. Then $(S,P)$ is a $\Gamma$-contraction by Lemma \ref{thm:N01}. Let $\mathcal K=\mathcal H \oplus \mathcal H$ and consider $\widetilde A, \widetilde B$ on $\mathcal K$ defined by
\[
\widetilde A =
\begin{bmatrix}
S/2 & (S^2-4P)/4 \\
I & S/2
\end{bmatrix}
\text{ and }
\widetilde B =
\begin{bmatrix}
S/2 &-(S^2-4P)/4 \\
-I & S/2
\end{bmatrix}
\]

Then
\[
\widetilde A +\widetilde B=
\begin{bmatrix}
S&0 \\
0&S
\end{bmatrix}
\; \text{ and } \; \widetilde A \widetilde B=\widetilde B \widetilde A =
\begin{bmatrix}
P&0 \\
0&P
\end{bmatrix}.
\]
Evidently $(\widetilde A + \widetilde B, \widetilde A \widetilde B)$ is a $\Gamma$-contraction by being the direct sum of two $\Gamma$-contractions and it follows from part-(4) of Theorem \ref{char}. Then by Lemma \ref{thm:N01}, $\left(\dfrac{1}{2}(\widetilde A+\widetilde B),\dfrac{1}{4}\widetilde A \widetilde B \right)$ is a $\widehat \Gamma$-contraction and
\begin{equation}\label{eqn:N01}
\left(\dfrac{1}{2}(\widetilde A+\widetilde B),\dfrac{1}{4}\widetilde A \widetilde B \right)=
\left(
\begin{bmatrix}
\widehat S &0 \\
0&\widehat S
\end{bmatrix}\,,\,
\begin{bmatrix}
\widehat P &0 \\
0 & \widehat P
\end{bmatrix}
\right)
=\pi \left( \dfrac{\widetilde A}{2}\,, \dfrac{\widetilde B}{2} \right).
\end{equation}
Setting $A=\dfrac{\widetilde A}{2}$ and $B=\dfrac{\widetilde B}{2}$, it follows from (\ref{eqn:N01}) that
\[
\left(
\begin{bmatrix}
\widehat S &0 \\
0&\widehat S
\end{bmatrix}\,,\,
\begin{bmatrix}
\widehat P &0 \\
0 & \widehat P
\end{bmatrix}
\right)
=(A+B,AB).
\]
Thus $\mathcal K$ is a joint reducing subspace for $A+B,AB$ and that $(A+B)|_{\mathcal H}=\widehat S$, $(AB)|_{\mathcal H}=\widehat P$. It remains to show that $\|A\|,\|B\|$ are not greater than $1$. Since $(S,P)$ is a $\Gamma$-contraction, by Theorem \ref{char}, $\Gamma$ is a complete spectral set for $(S,P)$. Considering the matricial polynomial
$F(s,p)=
\begin{bmatrix}
s/2 & (s^2-4p)/4 \\
1 & s/2
\end{bmatrix}
$
we have that
\[
\| \widetilde A \|= \left\| 
\begin{bmatrix}
S/2 & (S^2-4P)/4 \\
I & S/2
\end{bmatrix} 
\right\| \leq \sup_{(s,p)\in \Gamma} \|F(s,p)\|.
\]
For any $(s,p)=(z_1+z_2,z_1z_2)\in\Gamma$, we have that $|s|\leq 2$ and $|(s^2-4p)/4|=|(z_1-z_2)^2/4|\leq 1$. Since for any matrix $Q=[q_{ij}]_{n\times n}$ , $\|Q \|\leq n.\, {\displaystyle \max_{i,j} {|q_{ij}|} }$, it follows that $\| \widetilde A \| \leq 2$. Similarly $\|\widetilde B \|\leq 2$. It follows that $A,B$ are contractions and the proof is complete.

\end{proof}

\noindent The rest of the proof of Theorem \ref{thm:N03} follows as a corollary of Theorem \ref{thm:N02}. Consider the $\widehat \Gamma$-contraction $(\widehat S, \widehat P)=(S/2, P/4)$. If $(\widehat S, \widehat P)$ is the restriction of $(A+B,AB)$ to $\mathcal H$ for a pair of commuting contractions $A,B$ acting on $\mathcal H \oplus \mathcal H$, then $(S,P)$ is the restriction of $(T_1+T_2,T_1T_2)$ to the joint reducing subspace $\mathcal H$, where $T_1=2A\,,\,T_2=2B$. Evidently $\|T_1\|, \|T_2\| \leq 2$. Also, $(T_1+T_2,T_1T_2)=\left(
\begin{bmatrix}
S&0 \\
0&S
\end{bmatrix},
\begin{bmatrix}
P&0 \\
0&P
\end{bmatrix}
\right)$
on $\mathcal H\oplus \mathcal H$, which is a $\Gamma$-contraction by being the direct sum of two $\Gamma$-contractions. This completes the proof. \qed

\vspace{0.5cm}

\section{The distinguished boundary $b\Gamma$ and the $\Gamma$-unitaries}\label{sec:05}

\vspace{0.5cm}

\noindent We begin this Section with a new description for the points in the distinguished boundary $b\Gamma$ of $\Gamma$.

\begin{lem}\label{lem:3-01}
Let $(s,p)$ be a point in $\mathbb C^2$. Then $(s,p)\in b\Gamma$ if and only if $s=\overline{s}p$ and $|s^2-4p|+|s|^2=4$.
\end{lem}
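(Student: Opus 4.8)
The plan is to reduce the statement to the pointwise characterizations already available and to exploit one clean algebraic identity. By Theorem~\ref{thm:char-DB}, membership $(s,p)\in b\Gamma$ is equivalent to $(s,p)\in\Gamma$ together with $|p|=1$, so it suffices to match the two conditions $s=\overline{s}p$ and $|s^2-4p|+|s|^2=4$ against the pair ``$(s,p)\in\Gamma$ and $|p|=1$''. The engine of the argument is the observation that the relation $s=\overline{s}p$ forces
\[
s^2 = s\cdot(\overline{s}p) = |s|^2 p,
\]
whence $s^2-4p=(|s|^2-4)p$ and therefore $|s^2-4p| = \big|\,|s|^2-4\,\big|\,|p|$. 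This turns the quantity $|s^2-4p|$ into something completely explicit as soon as $|p|$ is pinned down.

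First I would prove the forward implication. Assume $(s,p)\in b\Gamma$. Then $(s,p)\in\Gamma$ and $|p|=1$ by Theorem~\ref{thm:char-DB}, and in particular $|s|\leq 2$. Feeding $|p|=1$ into part (ii) of Theorem~\ref{thm:char-gamma} gives $|s-\overline{s}p|+1\leq 1$, i.e. $s=\overline{s}p$, which is the first required identity. Substituting this into the displayed identity above and using $|p|=1$ and $|s|\leq 2$, I get $|s^2-4p| = \big|\,|s|^2-4\,\big| = 4-|s|^2$, so that $|s^2-4p|+|s|^2 = 4$, the second required identity.

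For the converse, assume $s=\overline{s}p$ and $|s^2-4p|+|s|^2=4$. I would first extract $|p|=1$: if $s\neq 0$ then $p=s/\overline{s}$ has modulus $1$, while if $s=0$ the second hypothesis reads $4|p|=4$, again giving $|p|=1$. Since $s=\overline{s}p$ we have $|s-\overline{s}p|=0$, and from $|s^2-4p|\geq 0$ the norm equation yields $|s|^2\leq 4$, i.e. $|s|\leq 2$. Hence $|s-\overline{s}p|+|p|^2 = 0+1 = 1\leq 1$ and $|s|\leq 2$, so $(s,p)\in\Gamma$ by part (ii) of Theorem~\ref{thm:char-gamma}. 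Combined with $|p|=1$, Theorem~\ref{thm:char-DB} gives $(s,p)\in b\Gamma$.

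The computational heart of the matter is the identity $s^2=|s|^2p$; once it is in hand, both the equality $|s^2-4p|+|s|^2=4$ and the modulus condition $|p|=1$ fall out using only the previously established descriptions of $\Gamma$ and $b\Gamma$. The only point requiring a little care is the degenerate case $s=0$ in the converse, where $s=\overline{s}p$ alone does not determine $|p|$ and one must instead read $|p|=1$ off the norm equation; beyond that I foresee no real obstacle.
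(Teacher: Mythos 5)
Your proof is correct, but it routes through the characterizations differently from the paper, so a short comparison is worthwhile. For the forward implication the paper works directly from the parametrization $s=z_1+z_2$, $p=z_1z_2$ with $z_1,z_2\in\mathbb T$, getting $s=\overline{s}p$ by a one-line computation and the norm identity from the parallelogram-type computation $|z_1-z_2|^2+|z_1+z_2|^2=2(|z_1|^2+|z_2|^2)=4$; you instead stay at the level of the abstract characterizations, extracting $s=\overline{s}p$ from part~(ii) of Theorem~\ref{thm:char-gamma} with $|p|=1$ forced in, and then using the identity $s^2=s\cdot\overline{s}p=|s|^2p$ to evaluate $|s^2-4p|$ explicitly. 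That identity is the genuinely new ingredient in your argument and it is sound. For the converse the paper is slicker: since $s=\overline{s}p$ kills the term $2|s-\overline{s}p|$, the hypothesis $|s^2-4p|+|s|^2=4$ is literally the equality case of part~(iii) of Theorem~\ref{thm:char-gamma}, giving $(s,p)\in\Gamma$ in one step, after which $|p|=1$ is read off exactly as you do (with the same case split at $s=0$). You instead first pin down $|p|=1$ and $|s|\le 2$ and then verify part~(ii); this costs an extra line but is equally valid. Both arguments are complete; the paper's is marginally shorter because part~(iii) is tailor-made for the converse, while yours has the advantage of making the role of the relation $s^2=|s|^2p$ transparent in the forward direction.
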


\begin{proof}
Let $(s,p)\in b\Gamma$. Then $s=z_1+z_2\,, \, p=z_1z_2$ for some $z_1,z_2\in\mathbb T$. Therefore, $s=z_1+z_2=(\overline{z_1+z_2})z_1z_2=\overline{s}p$. Also
\[
|s^2-4p|+|s|^2=|z_1-z_2|^2+|z_1+z_2|^2=2(|z_1|^2+|z_2|^2)=4.
\]

Conversely, suppose $s=\overline{s}p$ and $|s^2-4p|+|s|^2=4$. Therefore, we have that
\begin{equation}\label{eqn:3-01}
2|s-\overline{s}p|+|s^2-4p|+|s|^2=4.
\end{equation}
So by part-(iii) of Theorem \ref{thm:char-gamma}, $(s,p)\in \Gamma$. If $s\neq 0$, then $s=\overline{s}p$ implies that $|p|=1$. If $s=0$, then from (\ref{eqn:3-01}) we have $|p|=1$. Thus $(s,p)\in \Gamma$ and $|p|=1$. So, by Theorem \ref{thm:char-DB} $(s,p)\in b\Gamma$ and the proof is complete.

\end{proof}

 In the literature, we have several characterizations for the points in $\Gamma$ and $\mathbb G$ (e.g., see \cite{ay-jot, tirtha-sourav}). Note that $\mathbb G$ is properly contained in $\Gamma \setminus b\Gamma$. The following lemma provides a characterization for the points in $\Gamma \setminus b\Gamma$.

\begin{lem}
Let $(s,p)\in \mathbb C^2$. Then $(s,p)\in \Gamma \setminus b\Gamma$ if and only if $|p|\neq 1$ and $|s-\overline{s}p|+|p|^2\leq 1$.
\end{lem}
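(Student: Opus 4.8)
The plan is to reduce the statement to the characterizations of $\Gamma$ and of $b\Gamma$ recalled above. The starting observation is that, by Theorem \ref{thm:char-DB}, a point of $\Gamma$ lies in $b\Gamma$ precisely when $|p|=1$; since every $(s,p)\in\Gamma$ satisfies $|p|\leq 1$, the set $\Gamma\setminus b\Gamma$ is exactly $\{(s,p)\in\Gamma:|p|<1\}$. Thus it suffices to show that the pair of conditions ``$|p|\neq 1$ and $|s-\overline{s}p|+|p|^2\leq 1$'' is equivalent to ``$(s,p)\in\Gamma$ and $|p|<1$''.

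The forward implication is immediate: if $(s,p)\in\Gamma\setminus b\Gamma$, then part-(ii) of Theorem \ref{thm:char-gamma} gives $|s-\overline{s}p|+|p|^2\leq 1$, while Theorem \ref{thm:char-DB} forces $|p|\neq 1$. For the converse I would first note that $|s-\overline{s}p|+|p|^2\leq 1$ already forces $|p|^2\leq 1$, since $|s-\overline{s}p|\geq 0$; together with $|p|\neq 1$ this yields $|p|<1$. It then remains to verify the single missing ingredient of part-(ii) of Theorem \ref{thm:char-gamma}, namely $|s|\leq 2$, after which $(s,p)\in\Gamma$ and, as $|p|<1$, Theorem \ref{thm:char-DB} places $(s,p)$ in $\Gamma\setminus b\Gamma$.

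The key step, and the only place where any real work is needed, is the bound $|s|\leq 2$. The idea is to split $s=\overline{s}p+(s-\overline{s}p)$ and apply the triangle inequality, giving $|s|\leq |s|\,|p|+|s-\overline{s}p|$, hence $|s|(1-|p|)\leq |s-\overline{s}p|\leq 1-|p|^2=(1-|p|)(1+|p|)$. Since $|p|<1$ the factor $1-|p|$ is strictly positive, so dividing through produces $|s|\leq 1+|p|\leq 2$, as required. I expect this triangle-inequality decomposition to be the one nontrivial move; everything else is bookkeeping with the two quoted theorems.
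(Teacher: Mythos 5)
Your proof is correct, and it takes a genuinely different route from the paper's in the converse direction. Both arguments derive $|p|<1$ the same way, but the paper then sets $\beta=\dfrac{s-\overline{s}p}{1-|p|^2}$, checks $|\beta|\leq 1$ and $s=\beta+\overline{\beta}p$, and concludes $(s,p)\in\Gamma$ from the $\beta$-characterization, part-(iv) of Theorem \ref{thm:char-gamma}. You instead aim at part-(ii), whose only missing ingredient is $|s|\leq 2$, and you supply it by writing $s=\overline{s}p+(s-\overline{s}p)$, so that $|s|(1-|p|)\leq|s-\overline{s}p|\leq(1-|p|)(1+|p|)$ and hence $|s|\leq 1+|p|\leq 2$ since $1-|p|>0$. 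The paper's route is shorter once one guesses the right $\beta$ (the verification $\beta+\overline{\beta}p=s$ is a one-line cancellation); your route avoids part-(iv) altogether, uses nothing beyond the triangle inequality, and in fact produces the slightly sharper estimate $|s|\leq 1+|p|$ for such points. Either way the final step, invoking Theorem \ref{thm:char-DB} with $|p|<1$ to place $(s,p)$ in $\Gamma\setminus b\Gamma$, is identical.
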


\begin{proof}

Let $(s,p)\in\Gamma \setminus b\Gamma$. Then by Theorem \ref{thm:char-DB}, $|p|<1$ and by part-(ii) of Theorem \ref{thm:char-gamma} we have that $|s-\overline{s}p|+|p|^2 \leq 1$.

Conversely, suppose $|p|\neq 1$ and $|s-\overline{s}p|+|p|^2\leq 1$. Then $|s-\overline{s}p|\leq 1-|p|^2$ and hence $|p|\leq 1$. Since $|p|\neq 1$, we have that $|p|<1$. Clearly $\dfrac{s-\overline{s}p}{1-|p|^2}\leq 1$ and choosing $\beta = \dfrac{s-\overline{s}p}{1-|p|^2}$, we see that $|\beta|\leq 1$ and $s=\beta + \overline{\beta}p$. Thus by part-(iv) of Theorem \ref{thm:char-gamma}, $(s,p)\in\Gamma$. Again since $|p|<1$, by Theorem \ref{thm:char-DB} $(s,p)\in \Gamma \setminus b\Gamma$ and the proof is complete.

\end{proof}

The following result is simple but useful.

\begin{lem}\label{lem:simple}
If $S,P$ are commuting operators such that $S=S^*P$ and $P$ is normal $($or hyponormal$)$, then $S$ is normal $($or hyponormal respectively$)$.
\end{lem}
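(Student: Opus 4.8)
The plan is to prove both cases simultaneously by computing the self-commutator $S^*S - SS^*$ explicitly and exhibiting it as a congruence transform of the self-commutator $P^*P - PP^*$ of $P$. Recall that $S$ is normal precisely when $S^*S - SS^* = 0$ and hyponormal precisely when $S^*S - SS^* \geq 0$, so the whole statement reduces to understanding this single operator. The only ingredients I would use are the hypothesis $S = S^*P$, its adjoint $S^* = P^*S$, and the commutativity $SP = PS$.

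First I would eliminate $S$ in favour of $S^*$ and $P$. Substituting $S = S^*P$ once into each product gives $S^*S = S^*(S^*P) = (S^*)^2 P$ and $SS^* = (S^*P)S^* = S^* P S^*$, so that
\[
S^*S - SS^* = (S^*)^2 P - S^* P S^* = S^*\big(S^*P - PS^*\big).
\]
Next I would rewrite the inner commutator $S^*P - PS^*$ using $S^* = P^*S$ together with $SP = PS$: one computes $S^*P = P^*SP = P^*PS$ and $PS^* = PP^*S$, whence $S^*P - PS^* = (P^*P - PP^*)S$. Feeding this back produces the key identity
\[
S^*S - SS^* = S^*(P^*P - PP^*)S.
\]

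From here the conclusion is immediate, and the proof splits trivially into the two cases. If $P$ is normal then $P^*P - PP^* = 0$, so the right-hand side vanishes and $S$ is normal. If $P$ is hyponormal then $P^*P - PP^* \geq 0$, and since the congruence $Q \mapsto S^*QS$ preserves positivity we obtain $S^*S - SS^* = S^*(P^*P - PP^*)S \geq 0$, so $S$ is hyponormal. I do not anticipate any genuine obstacle: the single nontrivial move is spotting the factorization that routes the self-commutator of $S$ through that of $P$, after which the positivity of the map $Q \mapsto S^*QS$ disposes of both statements in one stroke.
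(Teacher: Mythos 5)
Your proof is correct and reaches exactly the same key identity $S^*S-SS^*=S^*(P^*P-PP^*)S$ that the paper's proof establishes, differing only in the order in which the substitutions $S=S^*P$ and $S^*=P^*S$ are applied. The concluding use of positivity of $Q\mapsto S^*QS$ for the hyponormal case matches the paper's (implicit) argument.
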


\begin{proof}
The proof follows from the fact that
\begin{align*}
S^*S-SS^*=(P^*S)S-(S^*P)(P^*S) = P^*(S^*P)S-S^*PP^*S 
& = S^*(P^*P)S-S^*(PP^*)S \\& 
=S^*(P^*P-PP^*)S.
\end{align*}

\end{proof}

We now present a new characterization for the $\Gamma$-unitaries.
\begin{thm}\label{thm:Gamma-unitary}
Let $(S,P)$ be a pair of commuting Hilbert space operators. Then $(S,P)$ is a $\Gamma$-unitary if and only if $P$ is a normal operator, $S=S^*P$ and $\sqrt{(S^2-4P)^*(S^2-4P)} +S^*S =4I$.
\end{thm}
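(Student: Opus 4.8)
The plan is to recognize this statement as the operator-theoretic lift of the scalar description of $b\Gamma$ in Lemma \ref{lem:3-01}, and to feed both directions through the structure theorem for $\Gamma$-unitaries, Theorem \ref{thm:char-gamma-unitary}.

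For the forward implication, suppose $(S,P)$ is a $\Gamma$-unitary. Part (ii) of Theorem \ref{thm:char-gamma-unitary} immediately gives that $P$ is unitary, hence normal, and that $P^*S = S^*$; taking adjoints in the latter yields $S = S^*P$. It remains only to establish the operator identity, and for this I would pass to the representation in part (iii), writing $S = U_1 + U_2$ and $P = U_1U_2$ for commuting unitaries $U_1,U_2$. Then $S^2 - 4P = (U_1 - U_2)^2$, and since $U_1 - U_2$ is normal one checks that $\sqrt{(S^2-4P)^*(S^2-4P)} = (U_1-U_2)^*(U_1-U_2) = 2I - U_1^*U_2 - U_2^*U_1$, while $S^*S = 2I + U_1^*U_2 + U_2^*U_1$; adding the two expressions produces exactly $4I$.

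For the converse, suppose $P$ is normal, $S = S^*P$, and $\sqrt{(S^2-4P)^*(S^2-4P)} + S^*S = 4I$. First, taking adjoints in $S = S^*P$ gives $P^*S = S^*$, so the relation required by Theorem \ref{thm:char-gamma-unitary}(ii) is already in hand. Next, since $S$ and $P$ commute, $P$ is normal, and $S = S^*P$, Lemma \ref{lem:simple} shows $S$ is normal. Thus $S$ and $P$ form a commuting normal pair, and by the Fuglede--Putnam theorem the family $\{S,S^*,P,P^*\}$ is mutually commuting and generates a commutative $C^*$-algebra; in particular $S^2 - 4P$ is normal and its modulus $\sqrt{(S^2-4P)^*(S^2-4P)}$ lies in this algebra. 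The idea is now to apply the Gelfand transform (equivalently, the joint spectral measure of the commuting normal pair): under it the two hypotheses become the pointwise scalar relations $s = \overline{s}p$ and $|s^2 - 4p| + |s|^2 = 4$ on the joint spectrum, so by Lemma \ref{lem:3-01} every point $(s,p)$ of the joint spectrum lies in $b\Gamma$.

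Finally, I would harvest the two outstanding conditions of Theorem \ref{thm:char-gamma-unitary}(ii) from this spectral containment. Since $b\Gamma \subseteq \{|p| = 1\}$ by Theorem \ref{thm:char-DB}, the Gelfand transform of $P$ is unimodular, forcing $P^*P = I$, and normality then gives $PP^* = I$, so $P$ is unitary; likewise $|s| \leq 2$ on $\Gamma$ by Theorem \ref{thm:char-gamma}(ii), and since $S$ is normal its norm equals its spectral radius, giving $\|S\| \leq 2$. With $P$ unitary, $P^*S = S^*$, and $\|S\| \leq 2$ all verified, Theorem \ref{thm:char-gamma-unitary}(ii) yields that $(S,P)$ is a $\Gamma$-unitary. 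I expect the main obstacle to be the converse, specifically justifying that the three algebraic hypotheses really collapse to the scalar conditions of Lemma \ref{lem:3-01}: the crucial enabling step is deducing normality of $S$ via Lemma \ref{lem:simple}, so that Fuglede--Putnam applies and the modulus $\sqrt{(S^2-4P)^*(S^2-4P)}$ can legitimately be read off as $|s^2-4p|$ on the joint spectrum.
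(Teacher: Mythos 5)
Your proof is correct. The forward direction is essentially identical to the paper's: both obtain $S=S^*P$ from Theorem \ref{thm:char-gamma-unitary}(ii) and verify the operator identity by writing $S=U_1+U_2$, $P=U_1U_2$ and computing $\sqrt{(S^2-4P)^*(S^2-4P)}=(U_1-U_2)^*(U_1-U_2)$. The converse, however, takes a genuinely different route. The paper stays entirely algebraic: after invoking Lemma \ref{lem:simple} for normality of $S$ and reading $\|S\|\leq 2$ off the positivity of $4I-S^*S$, it squares the identity to get $(S^2-4P)^*(S^2-4P)=(4I-S^*S)^2$, expands, substitutes $S=S^*P$ to reduce everything to $S^*(S^*S-SS^*)S+16(P^*P-I)=0$, and concludes $P^*P=I$ from normality of $S$. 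You instead use Lemma \ref{lem:simple} plus Fuglede--Putnam to place $S,P$ in a commutative $C^*$-algebra and apply the Gelfand transform, reducing the two hypotheses to the scalar relations of Lemma \ref{lem:3-01}, so that the joint spectrum lands in $b\Gamma$; unimodularity of $\sigma(P)$ then gives $P$ unitary and $\|S\|=r(S)\leq 2$. Your argument is sound (the positive square root does lie in the generated commutative algebra and transforms to $|s^2-4p|$), and it has the conceptual merit of exhibiting the theorem as the exact operator-theoretic lift of Lemma \ref{lem:3-01}; the cost is the heavier machinery (Fuglede--Putnam, Gelfand theory), where the paper's computation is elementary and self-contained. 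Both arguments hinge on the same enabling step, namely deducing normality of $S$ from Lemma \ref{lem:simple}.
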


\begin{proof}

Let $(S,P)$ be a $\Gamma$-unitary. Then by definition $S,P$ are commuting normal operators. Also, by Theorem \ref{thm:char-gamma-unitary}, $S=S^*P$, $\|S\|\leq 2$ and $S=U_1+U_2,\, P=U_1U_2$ for a pair of commuting unitary operators $U_1,U_2$. Since $U_1,U_2,U_1^*,U_2^*$ are all commuting unitary operators, a simple calculation yields
\begin{align*}
& \sqrt{(S^2-4P)^*(S^2-4P)} +S^*S \\ & =\sqrt{(U_1^*-U_2^*)^2(U_1-U_2)^2}+(U_1+U_2)^*(U_1+U_2) \\& =4I.
\end{align*}
Conversely, suppose $P$ is normal operator satisfying $S=S^*P$ and
\begin{equation}\label{eqn:3-02}
\sqrt{(S^2-4P)^*(S^2-4P)}+S^*S=4I.
\end{equation}
Then by Lemma \ref{lem:simple}, $S$ is normal and
by the positivity of $4I-S^*S$, it follows that $\|S/2\| \leq 1$ which is same as saying that $\|S\|\leq 2$. Also from (\ref{eqn:3-02}), we have that
\[
(S^2-4P)^*(S^2-4P)=(4I-S^*S)^2\,,
\]
which on evaluation gives
\[
{S^*}^2S^2-4{S^*}^2P-4P^*S^2+16P^*P=16I+S^*SS^*S-8S^*S.
\]
Along with the fact that $S=S^*P$, this leads to $$S^*(S^*S-SS^*)S+16(P^*P-I)=0$$ from which it follows that $P^*P=I$ as $S$ is normal. Now the normality of $P$ concludes that $P$ is unitary. Hence by part-(ii) of Theorem \ref{thm:char-gamma-unitary}, $(S,P)$ is a $\Gamma$-unitary and the proof is complete.

\end{proof}


\section{Answer to Question 2}\label{sec:06}

\vspace{0.4cm}

\noindent In \cite{sourav}, an explicit minimal $\Gamma$-unitary dilation was constructed for a $\Gamma$-contraction $(S,P)$ which is analogous to the Schaeffer's minimal unitary dilation of a contraction (see CH-I of \cite{Nagy}). The most surprising fact about this $\Gamma$-unitary dilation is that the minimal dilation space for $(S,P)$ is exactly equal to the minimal unitary dilation space of the contraction $P$.

\begin{thm}[\cite{sourav}, Theorem 4.3]\label{main-dilation-theorem}
 Let $(S,P)$ be a $\Gamma$-contraction defined on a Hilbert space $\mathcal H$. Let $F$ and $F_*$ be the
  fundamental operators of $(S,P)$ and its adjoint $(S^*,P^*)$ respectively. Let
 $\mathcal K_0=\cdots\oplus\mathcal D_P\oplus\mathcal D_P\oplus\mathcal D_P\oplus\mathcal H\oplus
 \mathcal D_{P^*}\oplus\mathcal D_{P^*}\oplus\mathcal D_{P^*}\oplus\cdots=l^2(\mathcal D_P)\oplus
 \mathcal H\oplus l^2(\mathcal D_{P^*})$. Consider the operator pair $(T_0,U_0)$ defined on $\mathcal K_0$ by
\begin{align*}
& T_0(\cdots,h_{-2},h_{-1},\underbrace{h_0},h_1,h_2,\cdots) \\&
=(\cdots,Fh_{-2}+F^*h_{-1},Fh_{-1}+F^*D_Ph_0-F^*P^*h_1,
\\& \quad \quad \underbrace{Sh_0+D_{P^*}
F_*h_1},F_*^*h_1+F_*h_2,F_*^*h_2+F_*h_3,\cdots)\\ &
U_0(\cdots,h_{-2},h_{-1},\underbrace{h_0},h_1,h_2,\cdots) \\&
= (\cdots,h_{-2},h_{-1},D_Ph_0-P^*h_1,\underbrace{Ph_0+D_{P^*}h_1},h_2,h_3\cdots),
\end{align*}
where the $0$-th position of a vector in $\mathcal K_0$ has been indicated by an under brace. Then
$(T_0,U_0)$ is a minimal $\Gamma$-unitary dilation of $(S,P)$.
\end{thm}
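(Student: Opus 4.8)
The plan is to establish the two halves of the claim separately: that $(T_0,U_0)$ is a $\Gamma$-unitary, and that it is a \emph{minimal dilation} of $(S,P)$. The starting observation is that $U_0$, read off its block action, is precisely the Schaeffer form of the minimal unitary dilation of the contraction $P$ on $l^2(\mathcal D_P)\oplus \mathcal H\oplus l^2(\mathcal D_{P^*})$. I would quote from the classical theory (CH-I of \cite{Nagy}) both that $U_0$ is unitary and that $\mathcal K_0=\overline{\mathrm{span}}\{U_0^n h:\ n\in\mathbb Z,\ h\in\mathcal H\}$; the latter settles minimality at once, since the dilation space here coincides with the minimal unitary dilation space of $P$.

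To prove that $(T_0,U_0)$ is a $\Gamma$-unitary I would appeal to part-(ii) of Theorem \ref{thm:char-gamma-unitary}: it suffices to check that $T_0$ and $U_0$ commute, that $T_0=T_0^*U_0$, and that $\|T_0\|\le 2$ (unitarity of $U_0$ being already in hand). Each is a bi-infinite block computation on $\mathcal K_0$, and the engine driving every cancellation is the pair of fundamental-operator equations $S-S^*P=D_PFD_P$ and $S^*-SP^*=D_{P^*}F_*D_{P^*}$, together with $\omega(F),\omega(F_*)\le 1$, furnished by part-(4) of Theorem \ref{char}. Beyond these one needs the auxiliary intertwining identities that express $D_PS$ and $D_{P^*}S^*$ through $F,F^*$ and $F_*,F_*^*$ respectively; these are exactly what force the mixed entries of $T_0$ at positions $-1,0,1$ (the terms $F^*D_Ph_0-F^*P^*h_1$ and $D_{P^*}F_*h_1$) and make the off-diagonal blocks of $T_0U_0-U_0T_0$ and of $T_0^*U_0-T_0$ vanish diagonal by diagonal. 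The bound $\|T_0\|\le 2$ I would extract from $\omega(F),\omega(F_*)\le 1$ by viewing $T_0$ as a bilateral weighted shift whose weights are built from $F,F_*,S$ and estimating the block-row and block-column norms.

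For the dilation property I would verify $P_{\mathcal H}T_0^mU_0^n|_{\mathcal H}=S^mP^n$ for all $m,n\ge 0$. Since $U_0$ already dilates $P$, so that $P_{\mathcal H}U_0^n|_{\mathcal H}=P^n$, the substance is to check that the central block of $T_0$ restricted to $\mathcal H$ is exactly $S$ and that the band structure of $T_0,U_0$ prevents any intermediate application from feeding mass back into $\mathcal H$ before the final compression. Together with the polynomial convexity of $\Gamma$ (Lemma 2.1 of \cite{ay-jfa}) and the spectral theorem for the $\Gamma$-unitary $(T_0,U_0)$, this compression identity yields the von Neumann inequality and completes the proof that $(T_0,U_0)$ dilates $(S,P)$.

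The hardest step will be the second one, namely the simultaneous verification of commutativity and of the relation $T_0=T_0^*U_0$. This is where the precise coefficients in the definition of $T_0$ are pinned down, and where the delicate interplay between $F$ and $F_*$ (and between $D_P$ and $D_{P^*}$) must be used in full; arranging that every off-diagonal block cancels is the crux of the argument and the place where an incorrect coefficient would be immediately exposed.
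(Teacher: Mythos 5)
First, a point of order: this paper contains no proof of Theorem \ref{main-dilation-theorem} at all --- it is imported verbatim from \cite{sourav} (Theorem 4.3 there) and then used, so there is no in-paper argument to compare yours against line by line. Judged on its own terms, your architecture is the standard one and agrees with the strategy of the cited source: identify $U_0$ as the Schaeffer minimal unitary dilation of $P$ (whence unitarity, and minimality at once since $\overline{\mathrm{span}}\{U_0^nh:\ n\in\mathbb Z,\ h\in\mathcal H\}$ already exhausts $\mathcal K_0$); verify commutativity and $T_0=T_0^*U_0$ by block computation driven by $S-S^*P=D_PFD_P$, $S^*-SP^*=D_{P^*}F_*D_{P^*}$ and the intertwining identities for $D_PS$, $D_{P^*}S^*$; invoke part~(ii) of Theorem \ref{thm:char-gamma-unitary}; and obtain the dilation identity from joint semi-invariance of $\mathcal H$ (both $l^2(\mathcal D_P)$ and $l^2(\mathcal D_P)\oplus\mathcal H$ are jointly invariant for $T_0,U_0$, so compression to $\mathcal H$ is multiplicative on the algebra they generate --- this is the precise form of your ``no mass feeds back'' remark).

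The one genuine gap is the norm bound $\|T_0\|\le 2$. Estimating block-row and block-column sums cannot deliver it: the hypothesis supplied by part~(4) of Theorem \ref{char} is $\omega(F),\omega(F_*)\le 1$, which only guarantees $\|F\|\le 2$, so even the purely Toeplitz rows $(\dots,F,F^*,\dots)$ have row-sum bound $2\|F\|\le 4$; and even if one had $\|F\|\le 1$, the middle rows such as $\bigl(F,\ F^*D_P,\ -F^*P^*\bigr)$ carry three nonzero entries, so a Schur-type test overshoots the sharp constant $2$. The correct mechanism is the equivalence $\omega(F)\le 1\iff \|zF+\bar zF^*\|\le 2$ for all $z\in\mathbb T$: via the Fourier transform the two block-Toeplitz corners of $T_0$ are (compressions of) multiplication by the symbol $zF+\bar zF^*$ (respectively $zF_*^*+\bar zF_*$) and hence have norm at most $2$, while the finitely many cross blocks coupling $l^2(\mathcal D_P)$, $\mathcal H$ and $l^2(\mathcal D_{P^*})$ require a separate argument (in \cite{tirtha-sourav} and \cite{sourav} this is handled by a dedicated lemma converting the numerical radius hypothesis into positivity of $4I-T_0^*T_0$). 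Without replacing your row/column estimate by this numerical-radius-to-symbol argument, the verification of $\Gamma$-unitarity via part~(ii) of Theorem \ref{thm:char-gamma-unitary} does not close.
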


The matrices of $T_0$ and $U_0$ with respect to the orthogonal
decomposition $\cdots\oplus\mathcal D_P\oplus\mathcal
D_P\oplus\mathcal D_P\oplus\mathcal H\oplus \mathcal
D_{P^*}\oplus\mathcal D_{P^*}\oplus\mathcal D_{P^*}\oplus\cdots = l^2(\mathcal D_P)\oplus \mathcal H\oplus
l^2(\mathcal D_{P^*})$
of $\mathcal K_0$
are the following:

\begin{equation}\label{2.3}
T_0 =\left[
\begin{array}{ c c c c|c|c c c c}
\bm{\ddots}&\vdots &\vdots&\vdots   &\vdots  &\vdots& \vdots&\vdots&\vdots\\
\cdots&F&F^*&0  &0&  0&0&0&\cdots\\
\cdots&0&F&F^*  &0&  0&0&0&\cdots\\
\cdots&0&0&F  &F^*D_P&  -F^*P^*&0&0&\cdots\\ \hline

\cdots&0&0&0   &S&   D_{P^*}F_*&0&0&\cdots\\ \hline

\cdots&0&0&0   &0&  F_*^*& F_*&0&\cdots\\
\cdots&0&0&0   &0&  0&F_*^*&F_*&\cdots\\
\cdots&0&0&0  &0&   0& 0&F_*^*&\cdots\\
\vdots&\vdots&\vdots&\vdots&\vdots&\vdots&\vdots&\vdots&\bm{\ddots}\\
\end{array} \right],
\end{equation}

\vspace{5mm}

\begin{equation}\label{2.4}
U_0 = \left[
\begin{array}{ c c c c|c|c c c c}
\bm{\ddots}&\vdots &\vdots&\vdots   &\vdots  &\vdots& \vdots&\vdots&\vdots\\
\cdots&0&I&0  &0&  0&0&0&\cdots\\
\cdots&0&0&I  &0&  0&0&0&\cdots\\
\cdots&0&0&0  &D_P&  -P^*&0&0&\cdots\\ \hline

\cdots&0&0&0   &P&   D_{P^*}&0&0&\cdots\\ \hline

\cdots&0&0&0   &0&  0& I&0&\cdots\\
\cdots&0&0&0   &0&  0&0&I&\cdots\\
\cdots&0&0&0  &0&   0& 0&0&\cdots\\
\vdots&\vdots&\vdots&\vdots&\vdots&\vdots&\vdots&\vdots&\bm{\ddots}\\
\end{array} \right].
\end{equation}

This minimal $\Gamma$-unitary dilation on the space $\mathcal K_0$ is unique upto a unitary. Indeed, if $(T,U_0)$ is another $\Gamma$-unitary dilation of $(S,P)$ on $\mathcal K_0$ then one can verify that there exists a unitary on $\mathcal K_0$ that intertwins $T$ and $T_0$. Now if $(S_1,P)$ is another $\Gamma$-contraction with fundamental operators $F_1, F_{1*}$ of $(S_1,P)$ and $(S_1^*,P^*)$ respectively, then by the construction of Theorem \ref{main-dilation-theorem} above, $(S_1,P)$ also possesses a minimal $\Gamma$-unitary dilation on the same dilation space $\mathcal K_0$. Thus if $(T_1,U_0)$ on $\mathcal K_0$ is the proposed $\Gamma$-unitary dilation of $(S_1,P)$, then $T_1$ has similar block-matrix representation like that of $T_0$ as in (\ref{2.3}) with $S,F, F_*$ being replaced by $S_1,F_1$ and $F_{1*}$ respectively in the expression. So, the answer to \textit{Question 2} of Section 1 is YES if and only if $[F,F_1]=0=[F^*,F]-[F_1^*,F_1]$ and $[F_*,F_{1*}]=0=[F_*^*,F_*]-[F_{1*}^*,F_{1*}]$. Therefore, \textit{Question 2} boils down to whether the fundamental operators $F, F_1$ of two $\Gamma$-contractions $(S,P)$ and $(S_1,P)$ satisfy $[F,F_1]=0=[F^*,F]-[F_1^*,F_1]$ when $S, S_1$ commute. Here we show by examples that none of them is true in general.

\begin{eg}
Let $\mathcal H$ be a Hilbert space and let $S,S_1,P$ be operators
on $\mathcal H \oplus \mathcal H$ given by
\[
S=\begin{bmatrix} Q&0\\ 0&0 \end{bmatrix}\,,\,
S_1=\begin{bmatrix} R&0\\ Y&R \end{bmatrix} \textup{ and } P=
\begin{bmatrix} W&0\\ 0&0 \end{bmatrix}\,,
\]
where $Q,R,W,Y$ are operators on $\mathcal H$ that satisfy the following:
\begin{itemize}
\item[(i)] $QW=WQ$,
$RW=WR$ and $YW=YQ=0$ so that $S,S_1,P$ commute;
\item[(ii)] $Y^*W \neq 0$ ;
\item[(iii)] $\|S \|<2,\| S_1\|<2$ and $\|P\|<1$;
\item[(iv)] $D_W^{-1}(Q-Q^*W)D_W^{-1}, D_W^{-1}(R-R^*W)D_W^{-1}$ and $(Y-Y^*W)D_W^{-1}$ have norm less than $1$. 
\end{itemize}
Evidently $D_W$ is invertible because $\|P\|<1$.
To show the existence of such $Q,R,W,Y$, one can choose $\mathcal H$ to
be equal to $\mathbb C^2$ and
\[
 Q=\begin{pmatrix} q&0\\ 0&0 \end{pmatrix}\,,\,W=
\begin{pmatrix} 0&w\\ 0&0 \end{pmatrix} \textup{ and }
Y=\begin{pmatrix} y&0\\ 0&0 \end{pmatrix}.
\]
Also $R$ can be any
matrix with proper norm such that $RW=WR$. Now
\[
 S-S^*P=\begin{bmatrix} Q-Q^*W&0\\ 0&0 \end{bmatrix} \textup{ and }
S_1-S_1^*P=\begin{bmatrix} R-R^*W&0\\ Y-Y^*W&0 \end{bmatrix}.
\]
Also
\[
D_P^2=I-P^*P=\begin{bmatrix} D_W^2&0\\ 0&I \end{bmatrix}.
\]
Now if we set
\[
 F=\begin{bmatrix} D_W^{-1}(Q-Q^*W)D_W^{-1}&0\\ 0&0 \end{bmatrix}\,,\,
F_1=\begin{bmatrix} D_W^{-1}(R-R^*W)D_W^{-1}&0\\
(Y-Y^*W)D_W^{-1}&0
\end{bmatrix}\,,
\]
we see that $S-S^*P=D_PFD_P$ and $S_1-S_1^*P=D_PF_1D_P$ with $\|F\|, \|F_1\|<1$ by condition-(iv) above. Thus by part-(4) of Theorem \ref{char}, both $(S,P)$ and $(S_1,P)$ are $\Gamma$-contractions with fundamental operators $F$ and $F_1$ respectively. Evidently we can choose $Q,W$ and $R$ in such a way that the two operators $D_W^{-1}(Q-Q^*W)D_W^{-1}, D_W^{-1}(R-R^*W)D_W^{-1}$ do not commute and consequently we have that $FF_1\neq F_1F$. 
 
\end{eg}

\begin{eg}
Let $A,B,T$ be doubly commuting operators on a Hilbert space
$\mathcal H$ such that $A$ is normal, $B$ is non-normal and
$\|A\|, \|B\|,\|T\|< 1$ so that $\mathcal D_T=\mathcal H$. We set $S,S_1,P$ on
$\mathcal H \oplus \mathcal H$ in the following way:
$$ S=\begin{bmatrix} A&A^*\\ A^*T&A \end{bmatrix} \,,\, S_1
=\begin{bmatrix} B&B^*\\ B^*T&B \end{bmatrix}\,,\,
P=\begin{bmatrix} 0&I\\ T&0 \end{bmatrix}. $$ We choose $A,B,P$ in
such a way that the norms of $S$ and $S_1$ are less than $2$. A
straight forward computation shows that $P$ commutes with $S$ and
$S_1$. Also
$$ SS_1=\begin{bmatrix} AB+A^*B^*T&AB^*+A^*B\\ A^*TB+AB^*T&A^*TB^*+AB \end{bmatrix} $$
which by symmetry in the entities is equal to $SS_1$ as $A,B,T$
doubly commute. Therefore, $S,S_1$ and $P$ commute. Now
$$ S-S^*P=\begin{bmatrix} A&A^*\\ A^*T&A \end{bmatrix}-
\begin{bmatrix} T^*AT&A^*\\ A^*T&A \end{bmatrix} =
\begin{bmatrix} (I-T^*T)A&0\\ 0&0 \end{bmatrix}
= \begin{bmatrix} D_T^2A&0\\ 0&0 \end{bmatrix}. $$ By a similar
computation we have that
$$ S_1-S_1^*P=\begin{bmatrix} D_T^2B&0\\ 0&0 \end{bmatrix}. $$
Also
$$ D_P^2=\begin{bmatrix} D_T^2&0\\ 0&0 \end{bmatrix}. $$
Therefore, $\mathcal D_P \subseteq \mathcal H \oplus \{0 \}$ and
if we set $F$ and $F_1$ as
$$ F= \begin{bmatrix} A&0\\ 0&0 \end{bmatrix} \,,\,
F_1=\begin{bmatrix} B&0\\ 0&0 \end{bmatrix} $$ then it is evident
that $\|F\|, \|F_1\|$ are less than $1$ and that
$$ S-S^*P=D_PFD_P \,,\, S_1-S_1^*P=D_PF_1D_P. $$
Therefore, by part-(4) of Theorem \ref{char}, $(S,P)$ and
$(S_1,P)$ are $\Gamma$-contractions with fundamental operators $F,F_1$ respectively. Since $A$ is normal and $B$
is non-normal, it follows that $ 0=F^*F-FF^* \neq F_1^*F_1-F_1F_1^*$.

\end{eg}

\vspace{0.5cm}


\begin{thebibliography}{99}

\vspace{0.5cm}

\bibitem{Ag-Ly-Yo1} J. Agler, Z. Lykova, and N. J. Young, A geometric characterization of the symmetrized bidisc. \textit{J. Math. Anal. Appl.}, 473 (2019), 1377 -- 1413.\\

\bibitem{Ag-Ly-Yo2} J. Agler, Z. Lykova, and N. J. Young, Intrinsic directions, orthogonality, and distinguished geodesics in the symmetrized bidisc, \textit{J. Geom. Anal.}, 31 (2021), 8202 -- 8237.\\

\bibitem{ay-jfa} J. Agler and N. J. Young, A commutant lifting theorem for a domain in $\mathbb{C}^2$ and spectral
interpolation, \textit{J. Funct. Anal.} 161 (1999), 452 -- 477.\\

\bibitem{ay-jot} J. Agler and N.J. Young, A model theory for
$\Gamma$-contractions, \textit{J. Operator Theory} 49 (2003),
45 -- 60.\\

\bibitem{ay-pems} J. Agler and N. J. Young, Operators having the symmetrized bidisc as a spectral set, \textit{Proc. Edin. Math. Soc. (2)}, 43 (2000), 195 -- 210. \\

\bibitem{ay-jga} J. Agler and N. J. Young, The hyperbolic geometry of the symmetrized bidisc, \textit{J. Geom. Anal.}, 14 (2004), 375 -- 403.\\

\bibitem{B-B-M} T. Bhattacharyya, A. Biswas and A. Maitra, Geometry of the symmetrized bidisc, \textit{To appear in Indiana Univ. Math. J.}, Available at \textit{ https://arxiv.org/pdf/2005.00289.pdf}.\\

\bibitem{B-D-S} T. Bhattacharyya, B. K. Das and H. Sau, Toeplitz operators on the symmetrized bidisc, \textit{Int. Math. Res. Not. IMRN}, 11 (2021), 8492 -- 8520.\\

\bibitem{T-H} T. Bhattacharyys, S. Lata and H. Sau, Admissible fundamental operators, \textit{J. Math. Anal. Appl.}, 425 (2015), 983 -- 1003.\\

\bibitem{tirtha-sourav} T. Bhattacharyya, S. Pal and S. Shyam Roy, Dilations of
$\Gamma$- contractions by solving operator equations, \textit{Adv.
in Math.} 230 (2012), 577 -- 606.\\

\bibitem{tirtha-sourav1} T. Bhattacharyya and S. Pal, A functional
model for pure $\Gamma$-contractions, \textit{J. Operator Thoery},
71 (2014), 327 -- 339. \\

\bibitem{B-S1} T. Bhattacharyya and H. Sau, Holomorphic functions on the symmetrized bidisk—realization, interpolation and extension, \textit{J. Funct. Anal.}, 274 (2018), 504 -- 524.\\

\bibitem{B-S2} T. Bhattacharyya and H. Sau, Interpolating sequences and the Toeplitz corona theorem on the symmetrized bidisk, \textit{To appear in Journal of Operator Theory}, Available at \textit{http://arxiv.org/abs/1909.03237}.\\ 

\bibitem{N:L} N. Levan, Canonical decomposition of completely non-unitary contractions. \textit{J. Math. Anal. Appl.} 101, 514
-- 526 (1984). \\

\bibitem{N-P-T} N. Nikolov, P. Pflug and P. J. Thomas, Spectral Nevanlinna-Pick and Caratheodory-Fejer problems for $n\leq 3$, \textit{Indiana Univ. Math. J.}, 60 (2011), 883 -- 893. \\

\bibitem{sourav} S. Pal, From Stinespring dilation to Sz.-Nagy
dilation on the symmetrized bidisc and operator models,
\textit{New York J. Math.}, 20 (2014), 645 -- 664. \\

\bibitem{pal-shalit} S. Pal and O. M. Shalit, Spectral sets and
distinguished varieties in the symmetrized bidisc, \textit{J.
Funct. Anal.}, 266 (2014), 5779 -- 5800. \\

\bibitem{Pf-Zwo} P. Pflug and W. Zwonek, Exhausting domains of the symmetrized bidisc, \textit{Ark. Mat.}, 50 (2012), 397 -- 402.\\

\bibitem{J-S} J. Sarkar, Operator theory on symmetrized bidisc, \textit{Indiana Univ. Math. J.}, 64 (2015), 847 -- 873.\\

\bibitem{Nagy} B. Sz.-Nagy, C. Foias, L. Kerchy and H. Bercovici,
Harmonic Analysis of Operators on Hilbert Space.
\textit{Universitext, Springer}, New York, 2010.


\end{thebibliography}
\end{document}